\definecolor{verylight}{gray}{0.97}
\definecolor{light}{gray}{0.9}
\definecolor{medium}{gray}{0.85}
\definecolor{dark}{gray}{0.6}
\def\frk{\frak}               
\def\Phi{{\frk n}}
\def\Phi{{\frk N}}
\def\opn#1#2{\def#1{\operatorname{#2}}} 
\opn\chara{char} \opn\length{\ell} \opn\pd{pd} \opn\rk{rk}
\opn\projdim{proj\,dim} \opn\injdim{inj\,dim} \opn\rank{rank}
\opn\depth{depth} \opn\grade{grade} \opn\height{height}
\opn\embdim{emb\,dim} \opn\codim{codim}
\opn\Tr{Tr} \opn\bigrank{big\,rank}
\opn\superheight{superheight}\opn\lcm{lcm}
\opn\trdeg{tr\,deg}
\opn\reg{reg} \opn\lreg{lreg} \opn\ini{in} \opn\lpd{lpd}
\opn\size{size}\opn\bigsize{bigsize}
\opn\cosize{cosize}\opn\bigcosize{bigcosize}
\opn\sdepth{sdepth}\opn\sreg{sreg}
\opn\link{link}\opn\fdepth{fdepth}
\opn\deg{deg}
\opn\max{max}
\opn\indeg{indeg}
\opn\min{min}
\opn\psln{psln}
\opn\div{div} \opn\Div{Div} \opn\cl{cl} \opn\Cl{Cl}
\let\epsilon\varepsilon
\let\phi=\varphi
\let\kappa=\varkappa
\opn\Spec{Spec} \opn\Supp{Supp} \opn\supp{supp} \opn\Sing{Sing}
\opn\Ass{Ass} \opn\Min{Min}\opn\Mon{Mon} \opn\dstab{dstab} \opn\astab{astab}
\opn\Syz{Syz}
\opn\Ann{Ann} \opn\Rad{Rad} \opn\Soc{Soc}
\opn\Im{Im} \opn\Ker{Ker} \opn\Coker{Coker} \opn\Am{Am}
\opn\Hom{Hom} \opn\Tor{Tor} \opn\Ext{Ext} \opn\End{End}
\opn\Aut{Aut} \opn\id{id}
\opn\nat{nat}
\opn\pff{pf}
\opn\Pf{Pf} \opn\GL{GL} \opn\SL{SL} \opn\mod{mod} \opn\ord{ord}
\opn\Gin{Gin} \opn\Hilb{Hilb}\opn\sort{sort}
\opn\initial{init}
\opn\ende{end}
\opn\height{height}
\opn\bight{bight}
\opn\hte{ht}
\opn\indeg{indeg}
\opn\reg{reg}
\opn\depth{depth}
\opn\type{type}
\opn\ldim{ldim}
\opn\maxdeg{maxdeg}
\opn\aff{aff} \opn\con{conv} \opn\relint{relint} \opn\st{st}
\opn\lk{lk} \opn\cn{cn} \opn\core{core} \opn\vol{vol}
\opn\link{link} \opn\star{star}\opn\lex{lex}
\opn\gr{gr}
\def\pot#1#2{#1[\kern-0.28ex[#2]\kern-0.28ex]}
\opn\dirlim{\underrightarrow{\lim}}
\opn\inivlim{\underleftarrow{\lim}}
\def\Implies{\ifmmode\Longrightarrow \else
        \unskip${}\Longrightarrow{}$\ignorespaces\fi}
\def\implies{\ifmmode\Rightarrow \else
        \unskip${}\Rightarrow{}$\ignorespaces\fi}
\def\iff{\ifmmode\Longleftrightarrow \else
        \unskip${}\Longleftrightarrow{}$\ignorespaces\fi}
 \theoremstyle{plain}
\newtheorem{Theorem}{Theorem}[section]
 \newtheorem{Lemma}[Theorem]{Lemma}
 \newtheorem{Corollary}[Theorem]{Corollary}
 \newtheorem{Proposition}[Theorem]{Proposition}
 \theoremstyle{definition}
 \newtheorem{Definition}[Theorem]{Definition}
 \newtheorem{Remark}[Theorem]{Remark}
 \newtheorem{Example}[Theorem]{Example}
 \newtheorem{acknowledgement}{Acknowledgement}
\let\epsilon\varepsilon
\let\kappa=\varkappa
\def\qed{\ifhmode\textqed\fi
      \ifmmode\ifinner\quad\qedsymbol\else\dispqed\fi\fi}
\def\textqed{\unskip\nobreak\penalty50
       \hskip2em\hbox{}\nobreak\hfil\qedsymbol
       \parfillskip=0pt \finalhyphendemerits=0}
\def\dispqed{\rlap{\qquad\qedsymbol}}
\opn\dis{dis}
\def\pnt{{\raise0.5mm\hbox{\large\bf.}}}
\opn\Lex{Lex}
\begin{document}

\author[Mafi and Naderi]{ Amir Mafi and Dler Naderi}
\title{A note on almost Cohen-Macaulay monomial ideals}

\address{Amir Mafi, Department of Mathematics, University Of Kurdistan, P.O. Box: 416, Sanandaj, Iran.}
\email{A\_Mafi@ipm.ir}
\address{Dler Naderi, Department of Mathematics, University of Kurdistan, P.O. Box: 416, Sanandaj,
Iran.}
\email{dler.naderi65@gmail.com}

\begin{abstract}
Let $R = k[x_1,\ldots, x_n]$ be the polynomial ring in $n$ variables over a field $k$ and let $I$ be a monomial ideal of $R$. In this paper, we study almost Cohen-Macaulay simplicial complex. Moreover, we characterize the almost Cohen-Macaulay polymatroidal Veronese type and transversal polymatroidal ideals and furthermore we give some examples.
\end{abstract}

\subjclass[2010]{ 13C14, 13C05}
\keywords{Almost Cohen-Macaulay rings, polymatroidal ideals, simplicial complex, Veronese type ideal}

\maketitle
\section*{Introduction}
Throughout this paper, we assume that $R = k[x_1,\ldots, x_n]$ is the polynomial ring in $n$ variables over a field $k$, $\frak{m}=(x_1,\ldots,x_n)$ the unique homogeneous maximal ideal of $R$ and $I$ a monomial ideal of $R$. We denote, as usual, by $G(I)$ the unique minimal set of monomial generators of $I$. If $I$ is generated in a single degree, then $I$ is said to be {\it polymatroidal} if for any two elements $u, v \in G(I)$ such that $\deg_{x_{i}}(v) < \deg_{ x_{i}}(u)$ there exists an index $j$ with $\deg_{x_{j}} (u) < \deg_{x_{j}} (v)$ such that $x_{j}(u/x_{i}) \in G(I)$. The polymatroidal ideal $I$ is called {\it matroidal} if $I$ is generated by square-free monomials (see \cite{HH2} or \cite{HH1}). One of the most distinguished polymatroidal ideals is the ideal of {\it Veronese type} and the other is {\it transversal} polymatroidal ideals. Consider the fixed positive integers $d$ and $1\leq a_n \leq\ldots\leq a_1 \leq d$. The ideal of Veronese type of $R$ indexed by $d$ and $(a_1,\ldots, a_n)$ is the ideal $I_{(d; a_1,\ldots, a_n)}$ which is generated by those monomials $u = x_1^{i_1}\ldots x_n^{i_n}$ of $R$ of degree $d$ with $i_j \leq a_j$ for each $1 \leq j \leq n$. Let $F$ be a non-empty subset of $[n]$ and $P_F = ({x_i | i \in F})$ is the monomial prime ideal. A transversal polymatroidal ideal is an ideal $I$ of the form $I = P_{F_1} P_{F_2}\ldots P_{F_r}$, where $F_1,\ldots, F_r$ is a collection of non-empty subsets of $[n]$ with $r \geq 1$ (see \cite{HRV}).

Herzog and Hibi \cite{HH3} proved that a polymatroidal ideal $I$ is Cohen-Macaulay (i.e. CM) if and only if $I$ is a principal ideal, a Veronese ideal, or a square-free Veronese ideal. Note that $I$ is CM whenever $R/I$ is a CM ring. Vladoiu in \cite{VL} proved that a Veronese type ideal $I$ is CM if and only if $\Ass(I)=\Min(I)$. We say that the monomial ideal $I$ is almost Cohen-Macaulay (i.e. aCM) when $\depth R/I \geq \dim R/I-1$. It is clear that all CM monomial ideals are aCM. Several authors studied almost Cohen-Macaulay modules (see for example \cite{CTT, Ha, I, K1, K2, MT, TM, TMA}).\\
For a square-free monomial ideal $I$ of $R$, we may consider the simplicial complex $\Delta$ for which $I = I_{\Delta}$ is the Stanley-Reisner ideal of $\Delta$ and $K[\Delta] = R/I_{\Delta}$ is the Stanley-Reisner ring. Eagon and Reiner \cite{ER} proved that $I$ is CM if and only if the square-free Alexander dual $I^{\vee}$ has linear resolution.

In this paper we are interested in studying the aCM simplicial complex. Also, we characterize the aCM polymatroidal Veronese type and transversal polymatroidal ideals and we give some examples. For any unexplained notion or terminology, we refer the reader to
\cite{HH1, V}. Several explicit examples were performed with help of the computer algebra system Macaulay2 \cite{G}.

\section{Preliminaries}
In this section, we recall some definitions and known results which is used in this paper. Let $\Delta$ be a simplicial complex on the vertex set $V = \{x_1,\ldots, x_n \}$. Every element of $\Delta$ is called a face of $\Delta$ and a facet of $\Delta$ is a maximal face of $\Delta$ with respect to inclusion. If all facets of $\Delta$ have the same cardinality, then $\Delta$ is called pure. Let $\Delta^{\vee}$ be the dual simplicial complex of $\Delta$, that is to say, $\Delta^{\vee} = \{V \setminus F~ | ~F \notin \Delta \}$. If $I$ is a square-free monomial ideal, then $I = \cap_{i=1}^{t} \frk{p_{i}}$ where each of the $\frk{p_{i}}$ is a monomial prime ideal of $I$. The ideal $I^{\vee}$ which is minimally generated by the monomial $u_i = \prod_{x_{j} \in \frk{p_{i}}} x_{j}$ is called the {\it Alexander dual} of $I$. For the simplicial complex $\Delta$ and $F \in \Delta$, link of $F$ in $\Delta$ is defined as $lk_{\Delta}(F) = \{G \in \Delta ~| ~G \cap  F = \emptyset, G \cup  F \in \Delta \}$. Let $M$ be a finitely generated graded $R$-module, the regularity of $M$ is defined by
\[\reg M := \max \{ j ~| ~ \beta_{i,i+j}(M) \ne 0 \}.\]
Terai \cite{Te}, defined the initial degree of $M$ by
\[ \indeg M:= \min \{ j ~|~ M_{j} \neq 0 \}= \min \{ j ~|~ \beta_{0,j}(M) \neq 0 \}.\]
It is clear that $\reg M \geq  \indeg M$, with equality if and only if $M$ has linear resolution.
Also, Terai proved the following interesting results:
\begin{Theorem}\label{T1}
 Let $\Delta$ be a simplicial complex on the vertex set $[n]$ . Then
\[ \reg (I_{\Delta})-\indeg  (I_{\Delta})= \dim k[\Delta^{\vee}] - \depth k[\Delta^{\vee}].\]
In particular $\reg I_{\Delta}= \pd k[\Delta^{\vee}]$ and $\indeg I_{\Delta} = \mathrm{e}\mathrm{m}\mathrm{b} \dim k[\Delta^{\vee}]-\dim k[\Delta^{\vee}]$.
\end{Theorem}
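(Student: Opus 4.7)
The plan is to prove the two ``in particular'' identities separately and then deduce the main equation by combining them via the Auslander--Buchsbaum formula. The central identity $\reg I_\Delta = \pd k[\Delta^\vee]$ is the deeper part (this is Terai's original duality), while the initial degree identity is combinatorial and relatively direct.

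For the initial degree identity, I would argue as follows. The ideal $I_\Delta$ is minimally generated by the squarefree monomials $x_F = \prod_{i \in F} x_i$ indexed by the minimal non-faces $F$ of $\Delta$, so $\indeg I_\Delta$ equals the minimum cardinality $c$ of such a minimal non-face. By the definition of the Alexander dual, the facets of $\Delta^\vee$ are exactly the complements $[n]\setminus F$ with $F$ a minimal non-face of $\Delta$; hence the maximum facet size of $\Delta^\vee$ is $n-c$ and $\dim k[\Delta^\vee] = \dim \Delta^\vee + 1 = n - c$. Since $\embdim k[\Delta^\vee] = n$, this gives $\indeg I_\Delta = n - \dim k[\Delta^\vee] = \embdim k[\Delta^\vee] - \dim k[\Delta^\vee]$.

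For the main identity $\reg I_\Delta = \pd k[\Delta^\vee]$, the standard route is via Hochster's formula for multigraded Betti numbers together with combinatorial Alexander duality. Hochster's formula gives
\[
\beta_{i, \sigma}(I_\Delta) = \dim_k \tilde H_{|\sigma|-i-2}(\Delta_\sigma; k)
\]
for every squarefree multidegree $\sigma \subseteq [n]$, and an analogous expression describes the multigraded Betti numbers of $k[\Delta^\vee]$ in terms of reduced homology of induced subcomplexes of $\Delta^\vee$. One then checks directly that $(\Delta_\sigma)^\vee = \lk_{\Delta^\vee}([n]\setminus\sigma)$, and combinatorial Alexander duality for the complex $\Delta_\sigma$ on the vertex set $\sigma$ yields
\[
\tilde H_{|\sigma|-i-2}(\Delta_\sigma;k) \cong \tilde H_{i-1}(\lk_{\Delta^\vee}([n]\setminus\sigma);k).
\]
Bookkeeping the homological degrees produces a bijection between the nonzero entries of the two bigraded Betti tables under which the column index on the $I_\Delta$ side matches the homological position on the $k[\Delta^\vee]$ side; taking maxima over this bijection delivers $\reg I_\Delta = \pd k[\Delta^\vee]$.

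Finally, combining the two identities with the Auslander--Buchsbaum formula $\pd k[\Delta^\vee] = n - \depth k[\Delta^\vee]$ yields
\[
\reg I_\Delta - \indeg I_\Delta = \pd k[\Delta^\vee] - (n - \dim k[\Delta^\vee]) = \dim k[\Delta^\vee] - \depth k[\Delta^\vee],
\]
which is the main equation. The principal obstacle is the careful index bookkeeping required to convert the Hochster formula plus combinatorial Alexander duality into the bigraded Betti-table bijection; once that bijection is in place, the remaining steps are formal.
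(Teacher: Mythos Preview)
The paper does not give a proof of this statement: it is quoted in the Preliminaries section as a result of Terai (reference \cite{Te}) and used as a black box thereafter. So there is nothing on the paper's side to compare your argument against.

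That said, your outline is the standard route to Terai's duality and is sound. The identity $\indeg I_\Delta = n - \dim k[\Delta^\vee]$ is exactly the elementary fact that the minimal non-faces of $\Delta$ are the complements of the facets of $\Delta^\vee$; the identity $\reg I_\Delta = \pd k[\Delta^\vee]$ is obtained, as you indicate, by combining Hochster's formula with the isomorphism $\tilde H_{|\sigma|-i-2}(\Delta_\sigma;k)\cong \tilde H_{i-1}(\lk_{\Delta^\vee}([n]\setminus\sigma);k)$ coming from combinatorial Alexander duality on the vertex set $\sigma$; and Auslander--Buchsbaum then yields the displayed equation. One small point to be careful with in a full write-up: your claim $\embdim k[\Delta^\vee]=n$ requires that every vertex actually lies in $\Delta^\vee$, i.e.\ that $\Delta$ contains no face of cardinality $n-1$; in the degenerate cases one should read the formula with $n$ in place of $\embdim k[\Delta^\vee]$ (which is how your argument in fact proceeds).
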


\begin{Theorem}\label{T0}(\cite[Theorem 3.4]{HT})
 Let $I=(u_1,\ldots,u_r)$ be a monomial ideal of $R$. If $u_1,\ldots,u_r$ is an $R$-regular sequence with $\deg(u_i)=d_i$, then $\reg(I)=d_1+\ldots+d_r-r+1$.
\end{Theorem}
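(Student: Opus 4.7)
The plan is to build the minimal graded free resolution of $R/I$ directly from the Koszul complex of the regular sequence $u_1,\ldots,u_r$, and then read off the regularity from the resulting Betti numbers. Since $u_1,\ldots,u_r$ is an $R$-regular sequence, the Koszul complex $K_\bullet(u_1,\ldots,u_r)$ is acyclic and gives a graded free resolution of $R/I$ whose $i$-th term is
\[
K_i \;=\; \bigoplus_{1 \leq j_1 < \cdots < j_i \leq r} R(-d_{j_1}-\cdots-d_{j_i}).
\]
Because each generator $u_j$ has positive degree $d_j \geq 1$, every entry of every differential lies in $\mathfrak{m}$, so this resolution is automatically minimal.

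From minimality one reads off
\[
\beta_{i,j}(R/I) \;=\; \#\bigl\{\, S \subseteq [r] \,:\, |S|=i,\ \sum_{k\in S} d_k = j \,\bigr\}.
\]
Consequently
\[
\reg(R/I) \;=\; \max_{S\subseteq [r]} \Bigl(\, \sum_{k\in S} d_k - |S| \,\Bigr) \;=\; \sum_{k\in S}(d_k - 1)\Big|_{S=[r]} \;=\; \sum_{k=1}^r d_k - r,
\]
where the maximum is attained by taking $S=[r]$ because each summand $d_k-1$ is non-negative. Using the standard shift $\beta_{i,j}(I) = \beta_{i+1,j}(R/I)$, we get $\reg(I) = \reg(R/I)+1 = d_1+\cdots+d_r - r + 1$, as desired.

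The only genuine point to check is that the Koszul complex really is the minimal resolution, i.e.\ that no $u_j$ is a unit and no cancellation occurs between consecutive stages. Both amount to the inequalities $d_j \geq 1$, which are implicit in the hypothesis that $u_1,\ldots,u_r$ is a regular sequence in the graded polynomial ring $R$ (any element of a regular sequence lies in $\mathfrak{m}$, hence has positive degree). Thus the expected main obstacle, minimality of the Koszul resolution, is essentially automatic in this graded setting, and the proof reduces to the simple combinatorial optimization above.
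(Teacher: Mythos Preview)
Your argument is correct: for a homogeneous regular sequence the Koszul complex is a minimal graded free resolution of $R/I$, and the regularity can then be read off exactly as you do. The maximization step is also handled properly, since $d_k\ge 1$ guarantees that the maximum of $\sum_{k\in S}(d_k-1)$ over subsets $S\subseteq[r]$ is attained at $S=[r]$.

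Note, however, that the paper does not give its own proof of this statement. Theorem~\ref{T0} is quoted verbatim from \cite[Theorem~3.4]{HT} as a preliminary fact, with no argument supplied; it is only used later (in the proof of Theorem~\ref{T3}) to verify that certain Alexander duals have almost linear resolution. So there is nothing in the present paper to compare your proof against. Your Koszul-complex computation is in any case the standard and most direct route to this formula, and is almost certainly what underlies the cited result as well.
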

Herzog, Rauf and Vladoiu in \cite{HRV} defined the following definition:

\begin{Definition}
Let $I$ be a transversal polymatroidal ideal of the form $I = P_{F_1} P_{F_2}\ldots P_{F_r}$. The graph $G_I$ associated with $I$ is defined as follows:
the set of vertices $\mathcal{V}(G_I)$ is the set $\{1,\ldots,r\}$ and $\{i,j\}$ is an edge of $G_I$ if and only if $F_i\cap F_j\neq\emptyset$.
\end{Definition}

\begin{Theorem}\label{T5}(\cite[Theorem 4.7]{HRV})
Let $I$ be a transversal polymatroidal ideal. Then\\
$\Ass(I) = \{ P_{\mathcal{T}} : \mathcal{T}$  is a tree in $G_I \}$.
\end{Theorem}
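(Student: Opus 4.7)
The plan is to establish the two inclusions $\supseteq$ and $\subseteq$ separately, identifying a tree $\mathcal{T}$ in $G_I$ with the prime $P_{\mathcal{T}} = P_{F(\mathcal{T})}$ where $F(\mathcal{T}) = \bigcup_{i \in V(\mathcal{T})} F_i$. Since $I$ is monomial, every element of $\Ass(R/I)$ already has the form $P_F$ for some $F \subseteq [n]$, and is witnessed by a monomial $u$ with $(I:u) = P_F$.

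For the inclusion $\supseteq$, I would construct, for each tree $\mathcal{T}$ in $G_I$, an explicit monomial $u_{\mathcal{T}}$ satisfying $(I:u_{\mathcal{T}}) = P_{F(\mathcal{T})}$. The construction uses the tree combinatorics: root $\mathcal{T}$ at an arbitrary vertex, and for each edge $\{i,j\} \in E(\mathcal{T})$ with $j$ the child of $i$, select a variable $x_{v_{ij}}$ with $v_{ij} \in F_i \cap F_j$ (nonempty by the definition of $G_I$); for each index $\ell \notin V(\mathcal{T})$, select a variable $x_{w_\ell} \in F_\ell \setminus F(\mathcal{T})$, which exists because if every $F_\ell$ with $\ell \notin V(\mathcal{T})$ met only variables in $F(\mathcal{T})$, the tree $\mathcal{T}$ could be enlarged. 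Taking $u_{\mathcal{T}}$ to be the product of these chosen variables with appropriate multiplicities, one verifies by direct inspection that multiplying $u_{\mathcal{T}}$ by any $x_k$ with $k \in F(\mathcal{T})$ completes a full generator of $I = P_{F_1}\cdots P_{F_r}$, while multiplying by $x_k \notin F(\mathcal{T})$ does not, so $(I:u_{\mathcal{T}}) = P_{F(\mathcal{T})}$.

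For the inclusion $\subseteq$, take $P_F \in \Ass(R/I)$ with witness monomial $u$ so that $(I:u) = P_F$. First, argue that $F = \bigcup_{i \in S} F_i$ for some $S \subseteq [r]$: this follows from the product structure, since the colon $(P_{F_1}\cdots P_{F_r} : u)$ is generated by those variables that, together with the factors already divided out by $u$, complete a transversal generator. Concretely, $S$ can be chosen as the set of indices $i$ for which $u$ does not already contribute a variable from $F_i$. Second, show the induced subgraph $G_I[S]$ is connected: if $S = S_1 \sqcup S_2$ with no edge between them, then $\bigcup_{i \in S_1} F_i$ and $\bigcup_{i \in S_2} F_i$ are disjoint, and one can split $u$ as $u_1 u_2$ with $(I:u_k)$ properly containing a smaller prime, contradicting $(I:u) = P_F$. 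Finally, replace $S$ by the vertex set of any spanning tree $\mathcal{T}$ of $G_I[S]$; since $V(\mathcal{T}) = S$, we have $P_F = P_{F(\mathcal{T})} = P_{\mathcal{T}}$.

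The hardest step will be the connectedness argument in the converse direction: one must control the interaction between the product structure of $I$ and the colon ideal carefully enough to rule out disconnected $S$. I expect this to proceed by a careful exchange argument, using that a disconnection of $G_I[S]$ forces a product decomposition $I = J_1 J_2$ (after reindexing) with $J_1, J_2$ involving disjoint sets of variables, so that $P_F$ would have to decompose as a sum of proper primes—an impossibility for a prime ideal. The forward direction, by contrast, is essentially bookkeeping once the correct witness monomial is identified.
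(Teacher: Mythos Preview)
The paper does not prove this statement at all: Theorem~\ref{T5} is quoted verbatim from \cite[Theorem 4.7]{HRV} as a preliminary result in Section~1 and is used later as a black box. There is therefore no proof in the paper to compare your proposal against.

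That said, your sketch has a genuine gap in the $\supseteq$ direction. You claim that for every $\ell\notin V(\mathcal{T})$ one can choose $x_{w_\ell}\in F_\ell\setminus F(\mathcal{T})$, justifying this by saying that otherwise ``the tree $\mathcal{T}$ could be enlarged.'' But the theorem ranges over \emph{all} trees in $G_I$, not only maximal ones, so enlargeability is not a contradiction. Concretely, take $F_1=\{1,2\}$, $F_2=\{1\}$ and the single-vertex tree $\mathcal{T}=\{1\}$; then $F(\mathcal{T})=\{1,2\}$ and $F_2\setminus F(\mathcal{T})=\emptyset$, so your witness construction breaks down even though $P_{F(\mathcal{T})}=(x_1,x_2)$ is indeed associated (witnessed by $u=x_1$). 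The fix is either to construct the witness only for trees that are maximal with respect to the property $F(\mathcal{T})=F$ (which suffices, since distinct trees can yield the same prime), or to allow the ``outside'' variables $x_{w_\ell}$ to be chosen from $F_\ell$ regardless and then argue more carefully that the colon still equals $P_{F(\mathcal{T})}$. The original argument in \cite{HRV} handles this via an explicit irredundant primary decomposition rather than by building witnesses tree-by-tree; if you want a self-contained proof you should consult that source.
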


\begin{Corollary}\label{C3}\cite[Corollary 4.10]{HRV})
Let $I$  be a transversal polymatroidal ideal with the set of associated prime ideals $Ass(I) = \{ P_1,\ldots, P_l \}$. Consider $T_1,\ldots, T_l$ maximal trees of $G_I$ such that $P_j = P_{T_j}$ for all $j = 1,\ldots, l$. Then
\[ I^{k}=\cap_{j=1}^{l}P^{ka_{j}},\]
 is an irredundant primary decomposition of $I^k$ for every $k \geq 1$, where $a_j = |V(T_j)|$ for all $j$.
\end{Corollary}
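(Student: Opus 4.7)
The plan is to bootstrap from Theorem~\ref{T5} by viewing $I^k$ as another transversal polymatroidal ideal. Since $I = P_{F_1}\cdots P_{F_r}$, we can write
\[ I^k = \underbrace{(P_{F_1}\cdots P_{F_r})\cdots(P_{F_1}\cdots P_{F_r})}_{k \text{ copies}}, \]
i.e.\ a product of $kr$ monomial primes in which each $F_i$ appears with multiplicity $k$. Hence $I^k$ is itself transversal polymatroidal, and Theorem~\ref{T5} applies to it.

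First I would describe $G_{I^k}$ explicitly: it is obtained from $G_I$ by a $k$-fold ``clique blow-up'', replacing each vertex $i$ of $G_I$ by a clique $\{(i,1),\dots,(i,k)\}$ and connecting $(i,s)$ to $(j,t)$ whenever $i=j$ or $\{i,j\}\in E(G_I)$. A tree $\mathcal{T}'$ of $G_{I^k}$ then projects to a connected subgraph of $G_I$ whose vertex set $V\subseteq[r]$ supports the same prime $P_{\mathcal{T}'}=\bigl(x_\ell : \ell\in\bigcup_{i\in V}F_i\bigr)$. Consequently, the minimal primes of $I^k$ are still $P_1,\dots,P_l$ (the ones coming from maximal trees $T_j$ of $G_I$), and a maximal tree of $G_{I^k}$ over the prime $P_j$ is any spanning tree of the clique blow-up of $T_j$, which has exactly $k\,|V(T_j)| = ka_j$ vertices.

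The crux is then to identify the $P_j$-primary component of $I^k$. I would localize at $P_j$: the factors $P_{F_i}$ with $F_i\not\subseteq\bigcup_{i\in V(T_j)}F_i$ become the unit ideal, while the remaining $ka_j$ factors (those whose index lies in the clique blow-up of $T_j$) contribute the power $(P_j R_{P_j})^{ka_j}$. This identifies the $P_j$-primary component as $P_j^{ka_j}$, and the collection $\{P_1,\dots,P_l\}$ is irredundant because distinct maximal trees of $G_I$ yield pairwise incomparable primes. I expect the main obstacle to be the bookkeeping in the localization step, in particular verifying that exactly $ka_j$ (not fewer, not more) of the $kr$ factors survive: this requires using that $T_j$ is \emph{maximal} in $G_I$, so that the $k$ copies of every index in $V(T_j)$ lift to a connected subgraph of $G_{I^k}$ while all other indices correspond to primes not contained in $P_j$.
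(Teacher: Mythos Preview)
The paper does not give a proof of this statement; it is quoted from \cite[Corollary~4.10]{HRV} in the preliminaries section, so there is nothing in the paper to compare your argument against. That said, your proposal has a real gap.

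Your reduction of $I^k$ to a transversal polymatroidal ideal and the description of $G_{I^k}$ as a clique blow-up are fine, and the count that a maximal tree over $P_j$ in $G_{I^k}$ has $ka_j$ vertices is correct. The error is in the localization step. You assert that the surviving $ka_j$ factors ``contribute the power $(P_jR_{P_j})^{ka_j}$'', but the surviving factors are the primes $P_{F_i}$ for $i\in V(T_j)$, which are typically \emph{proper} subideals of $P_j$; their product is not $P_j^{a_j}$, even after localization. Take $F_1=\{1\}$, $F_2=\{1,2\}$, $F_3=\{2\}$: the maximal tree for $P_j=(x_1,x_2)$ is the whole path, $a_j=3$, yet $(x_1)(x_1,x_2)(x_2)=(x_1^2x_2,x_1x_2^2)$, and $x_1^3\notin(x_1^2x_2,x_1x_2^2)R_{P_j}$ since $x_2$ is not a unit there.

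There is a second, structural problem: when $P_j$ is an embedded prime (as above, where $(x_1),(x_2)\subset(x_1,x_2)$), localizing at $P_j$ does not isolate the $P_j$-primary component; $IR_{P_j}$ is the intersection of \emph{all} primary components at primes contained in $P_j$. So even a correct computation of $IR_{P_j}$ would not hand you the exponent $ka_j$ directly. One inclusion, $I^k\subseteq\bigcap_jP_j^{ka_j}$, is indeed easy (each monomial generator of $I$ picks up at least $a_j$ variables from $P_j$), but the reverse inclusion needs a genuine combinatorial argument exploiting the tree structure, not localization.
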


\begin{Theorem}\label{T6}(\cite[Theorem 4.12]{HRV})
Let $I=P_{F_1} P_{F_2}\ldots P_{F_d}$ be a transversal polymatroidal ideal. Then
\[ \depth(R/I)= c(G_{I})-1+n -|\cup_{i=1}^{d} F_{i} |,\]
where by $c(G_{I})$ we denote the number of connected components of the graph $G_I$ .
\end{Theorem}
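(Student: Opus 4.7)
The plan is to reduce to the case where the polynomial ring uses only the variables in $V := \bigcup_{i=1}^{d} F_{i}$ and then to induct on $c := c(G_{I})$. The variables $x_j$ with $j\notin V$ appear in no generator of $I$, so writing $R' := k[x_i : i\in V]$ and viewing $I$ as an ideal of $R'$ gives $R/I \cong (R'/I)[x_j : j\notin V]$, a polynomial extension in $n-|V|$ variables. Consequently $\depth(R/I) = \depth(R'/I) + (n-|V|)$, and it suffices to prove $\depth(R'/I) = c-1$.

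Suppose first that $G_I$ is connected, and pick a spanning tree $T$ of $G_I$; then $V(T) = \{1,\dots,d\}$, and Theorem~\ref{T5} yields
\[
P_T \;=\; P_{\bigcup_{i\in V(T)} F_i} \;=\; P_V \;=\; (x_i : i\in V)
\]
as an associated prime of $R'/I$. Since $P_V$ is precisely the graded maximal ideal of $R'$, we obtain $\depth(R'/I) = 0 = c(G_I)-1$, settling the connected case.

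For $c\geq 2$, let $C_1,\dots,C_c$ be the connected components of $G_I$, and set $V_k := \bigcup_{i\in C_k} F_i$ and $I_k := \prod_{i\in C_k} P_{F_i}$. Edges of $G_I$ record non-empty pairwise intersections among the $F_i$, so the sets $V_k$ are pairwise disjoint; the factors $I_k$ then live in disjoint variable sets, and hence $I = I_1\cdots I_c = I_1\cap\cdots\cap I_c$. Set $J := I_1\cdots I_{c-1}$ and consider the Mayer--Vietoris short exact sequence
\[
0 \to R'/(J\cap I_c) \to R'/J \oplus R'/I_c \to R'/(J+I_c) \to 0.
\]
The inductive hypothesis applied to $J$ (whose graph has $c-1$ components and which, inside $R'$, enjoys $|V_c|$ free variables) gives $\depth(R'/J) = (c-2)+|V_c|$; the connected case applied to $I_c$ gives $\depth(R'/I_c) = |V|-|V_c|$; and the tensor decomposition $R'/(J+I_c) \cong (k[V\setminus V_c]/J)\otimes_{k}(k[V_c]/I_c)$ yields $\depth(R'/(J+I_c)) = (c-2)+0 = c-2$. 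Since each $|V_k|\geq 1$, both $\depth(R'/J)$ and $\depth(R'/I_c)$ strictly exceed $c-2$, so the depth lemma forces $\depth(R'/I) = (c-2)+1 = c-1$.

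The delicate point is the bookkeeping in the inductive step: disjointness of the $V_k$ is invoked twice, first to replace the product $I_1\cdots I_c$ by the intersection (so that Mayer--Vietoris becomes available), and second to split $R'/(J+I_c)$ as a tensor product over $k$ (so that its depth is computable exactly). The strict inequality $\min\bigl(\depth(R'/J),\depth(R'/I_c)\bigr) > \depth(R'/(J+I_c))$ needed to extract equality, rather than mere inequality, from the depth lemma then follows automatically from $|V_k|\geq 1$. The genuinely conceptual input is the connected case, where Theorem~\ref{T5} applied to a spanning tree of $G_I$ identifies the graded maximal ideal of $R'$ as an embedded prime of $R'/I$ and thus pins the depth to zero, anchoring the induction.
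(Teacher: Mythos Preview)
The paper does not supply its own proof of this statement; Theorem~\ref{T6} is quoted verbatim from \cite[Theorem~4.12]{HRV} in the preliminaries section, so there is no in-paper argument to compare against.

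Your argument is correct. The reduction to the full-support ring $R'$ is routine, and the base case is handled cleanly: a spanning tree $T$ of a connected $G_I$ has $\bigcup_{i\in V(T)}F_i=V$, so Theorem~\ref{T5} exhibits the graded maximal ideal of $R'$ as an associated prime and forces $\depth(R'/I)=0$. In the inductive step the disjointness of the $V_k$ (which follows because an edge of $G_I$ records a non-empty intersection) legitimately yields both $I=J\cap I_c$ and the tensor splitting of $R'/(J+I_c)$, and your depth-lemma computation is accurate: the inequalities $|V_c|\geq 1$ and $|V\setminus V_c|=\sum_{k<c}|V_k|\geq c-1$ give $\min\bigl(\depth(R'/J),\depth(R'/I_c)\bigr)\geq c-1>c-2=\depth(R'/(J+I_c))$, from which the exact sequence pins $\depth(R'/I)$ to $c-1$ from both sides.

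One small remark on logical dependence: your proof of Theorem~\ref{T6} invokes Theorem~\ref{T5}, which the paper also imports from \cite{HRV}. That is perfectly acceptable within the paper's framework (both are treated as black boxes), but it means your argument is not self-contained in any stronger sense than the paper's own citation.
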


Vladoiu in \cite{VL} proved the following interesting result about associated prime ideals of Veronese type:

\begin{Theorem}\label{T7}
Let $I=I_{d;a_1,\ldots,a_n}$ be an ideal of Veronese type with $d>1$ and $a_i\geq 1$ for $i=1,\ldots n$. Then
$P_A\in\Ass(I)\Longleftrightarrow \sum_{i=1}^n a_i\geq d-1+\mid A\mid$ and $\sum_{i\notin A}a_i\leq d-1$.
\end{Theorem}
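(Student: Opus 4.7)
The plan is to apply the standard fact that, for a monomial ideal, $P_A \in \Ass(I)$ precisely when there exists a monomial $u = x_1^{b_1}\cdots x_n^{b_n}$ with $(I:u) = P_A$. The first ingredient is a clean membership criterion for $I$: since the generators are the degree-$d$ monomials $x_1^{i_1}\cdots x_n^{i_n}$ with $i_j \leq a_j$, a greedy argument shows that $u \in I$ if and only if $\sigma(u) := \sum_{j=1}^n \min(a_j, b_j) \geq d$. (If $\sigma(u) \geq d$, one can choose $i_j \leq \min(a_j, b_j)$ summing exactly to $d$; conversely, any generator dividing $u$ has $i_j \leq \min(a_j, b_j)$.)

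Next I translate the colon condition $(I:u) = P_A$ into arithmetic on the $b_i$. From $u \notin I$ one gets $\sigma(u) \leq d-1$. For each $i \in A$, the requirement $x_i u \in I$ forces $\min(a_i, b_i+1) - \min(a_i, b_i) = 1$, hence $b_i < a_i$, and also $\sigma(u) \geq d-1$; so $\sigma(u) = d-1$ exactly. For $i \notin A$, the requirement $x_i u \notin I$ combined with $\sigma(u) = d-1$ gives $b_i \geq a_i$. Since raising any $b_i$ above $a_i$ (for $i \notin A$) affects neither $\sigma(u)$ nor $(I:u)$, I may assume $b_i = a_i$ for $i \notin A$, reducing the problem to the existence of integers $0 \leq b_i \leq a_i - 1$ for $i \in A$ satisfying $\sum_{i \in A} b_i = d - 1 - \sum_{i \notin A} a_i$.

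Such $b_i$ exist exactly when the target sum lies in $[0,\sum_{i \in A}(a_i-1)]$, which unpacks to $\sum_{i \notin A} a_i \leq d-1$ and $\sum_{i=1}^n a_i \geq d-1+|A|$, matching the claimed inequalities. For the converse, given such $b_i$ the monomial $u = \prod_{i \in A} x_i^{b_i}\prod_{i \notin A} x_i^{a_i}$ satisfies $(I:u) = P_A$: the inclusion $P_A \subseteq (I:u)$ follows from $\sigma(x_i u) = d$ for $i \in A$, while for the reverse, any monomial $v$ with $\supp(v) \cap A = \emptyset$ yields $\sigma(uv) = \sum_{i \in A} b_i + \sum_{i \notin A} a_i = d-1$, so $uv \notin I$. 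The main subtlety I anticipate is being rigorous about the case analysis on whether $b_i < a_i$ or $b_i \geq a_i$, and confirming the full equality $(I:u) = P_A$ (not merely the inclusion $\{x_i : i \in A\} \subseteq (I:u)$), which is exactly where the normalization $b_i = a_i$ for $i \notin A$ plays its role.
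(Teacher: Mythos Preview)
Your argument is correct. The membership criterion $u\in I \iff \sum_j \min(a_j,b_j)\ge d$ is exactly right, and your derivation that a witness $u$ with $(I:u)=P_A$ forces $\sigma(u)=d-1$, $b_i<a_i$ for $i\in A$, and $b_i\ge a_i$ for $i\notin A$ is clean; the existence of the $b_i$'s in the stated interval then gives the two inequalities, and your construction in the converse direction (taking $b_i=a_i$ for $i\notin A$) genuinely yields $(I:u)=P_A$, not merely an inclusion, because multiplying by any monomial supported off $A$ leaves $\sigma$ fixed at $d-1$.

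There is, however, nothing to compare against: the paper does not prove this statement. Theorem~\ref{T7} sits in the preliminaries section and is quoted verbatim as a result of Vl\u{a}doiu \cite{VL}, with no argument supplied. So your write-up is not an alternative to the paper's proof but rather a self-contained replacement for an external citation. In that light it is quite nice: the approach is the natural one (and essentially the one Vl\u{a}doiu uses), reducing everything to the single invariant $\sigma(u)$ and an interval-hitting problem for $\sum_{i\in A}b_i$. One small cosmetic point: when you say ``raising any $b_i$ above $a_i$ \ldots\ affects neither $\sigma(u)$ nor $(I:u)$'', the claim about $(I:u)$ is not needed for the forward implication (the inequalities already follow from $b_i\ge a_i$ for $i\notin A$ without normalizing), and in the converse you simply \emph{choose} $b_i=a_i$; so that sentence can be dropped or rephrased to avoid suggesting you are modifying a given witness.
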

\section{The aCM polymatroidal ideal}
We start this section by the following definition:
\begin{Definition}
Let $I$ be a square-free monomial ideal in $R$. We say that $I$ has almost linear resolution precisely when $\reg(I) \leq  \indeg (I)+1$.
\end{Definition}
\begin{Remark}
 Let $I$ be a square-free monomial ideal in $R$ and $I^{\vee}$ be the Alexander dual of $I$. Then, by Theorem \ref{T1}, we have
 \[  \dim (R/I)- \depth(R/I)=\reg(I^{\vee}) -  \indeg (I^{\vee}).\]
 Hence $I$  is aCM if and only if $I^{\vee}$ has almost linear resolution.
 \end{Remark}
By the Auslander-Buchsbaum formula, it is known that the monomial ideal $I$ is CM if and only if $\hte (I) = \pd (R/I)$. We extend this result:

\begin{Proposition}\label{P1}
 Let $I$ be a monomial ideal of $R$. Then $I$ is aCM if and only if $\hte (I) \geq \pd (R/I) -1$.
 \end{Proposition}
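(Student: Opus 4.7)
The plan is to reduce the claimed inequality $\hte(I)\geq \pd(R/I)-1$ to the defining inequality $\depth(R/I)\geq \dim(R/I)-1$ by applying two standard identities, exactly mimicking how the classical equivalence $\hte(I)=\pd(R/I)$ characterizes the Cohen--Macaulay case.

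First I would invoke the Auslander--Buchsbaum formula for the finitely generated graded $R$-module $R/I$, namely
\[
\pd(R/I)+\depth(R/I)=\depth(R)=n,
\]
since $R=k[x_1,\dots,x_n]$ is a regular local-type graded ring of depth $n$. This rewrites $\depth(R/I)=n-\pd(R/I)$.

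Next I would use the fact that, because $R$ is a polynomial ring over a field (hence a catenary, equidimensional Noetherian ring with $\dim R=n$), every ideal $I$ of $R$ satisfies
\[
\dim(R/I)=n-\hte(I).
\]
This identity follows from the dimension formula applied to the minimal primes of $I$, together with $\dim R=n$.

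Substituting both equalities into the defining condition $\depth(R/I)\geq \dim(R/I)-1$ gives
\[
n-\pd(R/I)\;\geq\; n-\hte(I)-1,
\]
which rearranges at once to $\hte(I)\geq \pd(R/I)-1$. Conversely, the same rearrangement, read backwards, shows that $\hte(I)\geq \pd(R/I)-1$ forces $\depth(R/I)\geq\dim(R/I)-1$. There is no real obstacle here: the proof is a one-line substitution, and the only thing to check is that the two auxiliary identities genuinely apply in the present setting, which they do because $R$ is a polynomial ring over a field and $R/I$ is a finitely generated graded $R$-module.
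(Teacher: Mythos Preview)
Your proposal is correct and follows essentially the same route as the paper: both arguments combine the Auslander--Buchsbaum formula $\pd(R/I)+\depth(R/I)=n$ with the identity $\dim(R/I)=n-\hte(I)$ to convert the depth--dimension inequality into the height--projective dimension inequality. The only cosmetic difference is that the paper treats the forward implication by a case split (the Cohen--Macaulay case versus $\depth(R/I)=\dim(R/I)-1$), whereas you handle both directions with a single substitution; your version is slightly cleaner.
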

 \begin{proof}
Suppose that $I$ is aCM. Then $\depth R/I = \dim R/I$ or $\depth R/I = \dim R/I -1$. If $\depth R/I = \dim R/I$, then $I$ is CM and so $\hte (I) = \pd (R/I)$. Thus we have the result in this case. Let $\depth R/I = \dim R/I - 1$. Then by the Auslander-Buchsbaum formula, we have $\pd (R/I) = n- \depth R/I = n- \dim R/I + 1 = \hte (I)+1$. Therefore the result follows.
Conversely, let $\hte (I) \geq  \pd (R/I)-1$. Then $\dim R/I + \hte(I) \geq  \dim R/I + \pd (R/I)- 1$ and so $n- \pd (R/I) \geq \dim R/I -1$. Again by the Auslander-Buchsbaum formula $\depth R/I \geq \dim R/I - 1$. Thus $I$ is aCM, as required.
 \end{proof}
Let $I$ be a monomial ideal of $R$. Then the big height of $I$, denoted by $\bight(I)$, is $\max \{ \height (\frk{p}) ~|~ \frk{p} \in \Ass(I) \}$.

\begin{Corollary} \label{C1}
 Let $I$ be an aCM monomial ideal of $R$. Then $\bight(I) - \hte(I) \leq 1$.
 \end{Corollary}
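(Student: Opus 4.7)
The plan is to combine Proposition \ref{P1} with the standard inequality $\pd(R/I) \geq \bight(I)$, which in turn comes from localizing at an associated prime of maximal height and applying the Auslander--Buchsbaum formula.

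First, I would invoke Proposition \ref{P1}: since $I$ is aCM, we have
\[
\hte(I) \;\geq\; \pd(R/I) - 1.
\]
So the whole statement reduces to showing the (standard) bound $\pd(R/I) \geq \bight(I)$.

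To prove this bound, pick $\pp \in \Ass(R/I)$ with $\height(\pp) = \bight(I)$. Localizing at $\pp$, the prime $\pp R_\pp$ remains associated to $R_\pp/IR_\pp$, so $\depth(R_\pp/IR_\pp) = 0$. Applying Auslander--Buchsbaum over the regular local ring $R_\pp$ gives
\[
\pd_{R_\pp}(R_\pp/IR_\pp) \;=\; \depth R_\pp - \depth(R_\pp/IR_\pp) \;=\; \height(\pp) \;=\; \bight(I).
\]
Since projective dimension does not increase under localization, $\pd(R/I) \geq \pd_{R_\pp}(R_\pp/IR_\pp) = \bight(I)$.

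Combining the two inequalities yields $\hte(I) \geq \pd(R/I) - 1 \geq \bight(I) - 1$, i.e.\ $\bight(I) - \hte(I) \leq 1$, as required. There is no real obstacle here; the only ingredient beyond Proposition \ref{P1} is the well-known fact that the projective dimension of $R/I$ is bounded below by the height of any associated prime, which follows immediately from localization and Auslander--Buchsbaum.
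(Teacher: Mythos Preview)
Your proof is correct and follows exactly the same approach as the paper's own proof, combining Proposition~\ref{P1} with the inequality $\bight(I)\leq \pd(R/I)$. The paper simply states this inequality as ``known'', while you supply the standard justification via localization and Auslander--Buchsbaum.
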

 \begin{proof}
It is known that $\bight(I) \leq \pd(R/I)$. Thus by using Proposition \ref{P1}, we have $\bight(I) - \hte(I) \leq \pd(R/I) - \hte(I) \leq 1$.
 \end{proof}

\begin{Theorem}\label{T2}
Let $I$ be an ideal of Veronese type. Then the following statements are equivalent:
\item[(i)] $I$ is almost Cohen-Macaulay
\item[(ii)]
 $\bight(I) - \hte(I)\leq 1$

\end{Theorem}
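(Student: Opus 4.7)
The forward direction (i) $\Rightarrow$ (ii) is exactly Corollary \ref{C1} applied to $I$, since that result holds for arbitrary monomial ideals. The work lies in the converse.

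For (ii) $\Rightarrow$ (i), my plan is to reduce via Proposition \ref{P1}: it suffices to show $\pd(R/I) \leq \hte(I)+1$. Combined with the universal bound $\bight(I) \leq \pd(R/I)$ already used in the proof of Corollary \ref{C1}, together with the hypothesis $\bight(I) - \hte(I) \leq 1$, the whole problem reduces to establishing the equality
\[
  \pd(R/I) \;=\; \bight(I)
\]
for every ideal of Veronese type. Indeed, once this is known, $\pd(R/I) - \hte(I) = \bight(I) - \hte(I) \leq 1$, and Proposition \ref{P1} gives the aCM conclusion.

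To prove the key equality, I would use Theorem \ref{T7} to make everything combinatorial. After reindexing so that $a_1 \geq \cdots \geq a_n$, one reads off
\[
  \hte(I) \;=\; \min\{\,h : a_{h+1}+\cdots+a_n \leq d-1\,\}, \qquad \bight(I) \;=\; \min\bigl(n,\; a_1+\cdots+a_n - d + 1\bigr).
\]
The lower bound $\pd(R/I) \geq \bight(I)$ is automatic; for the upper bound (equivalently $\depth(R/I) \geq n - \bight(I)$), my plan is an induction on the pair $(n, \sum_i a_i)$ via the standard short exact sequence
\[
  0 \longrightarrow \bigl(R/(I\!:\!x_n)\bigr)[-1] \xrightarrow{\;\cdot x_n\;} R/I \longrightarrow R/(I+(x_n)) \longrightarrow 0,
\]
noting that $I : x_n$ is again of Veronese type, indexed by $(d-1;\, a_1, \ldots, a_{n-1}, a_n-1)$ (dropping the last entry if $a_n = 1$), and that $R/(I+(x_n))$ is the quotient of $k[x_1, \ldots, x_{n-1}]$ by the Veronese type ideal $I_{(d;\, a_1,\ldots,a_{n-1})}$. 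The depth lemma applied to this sequence, together with the inductive hypothesis at both ends and a careful bookkeeping of how $\hte$ and $\bight$ shift under the two reductions, should give the desired bound on $\depth(R/I)$.

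The main obstacle I anticipate is precisely this bookkeeping: the shifts in $d$, $n$, and the multiplicity sequence affect $\bight$ and $\hte$ in intertwined ways, and one needs to verify the induction hypothesis genuinely applies at both ends. As a sanity check and partial fallback, the sub-case $\bight(I) = \hte(I)$ already forces $\Ass(I) = \Min(I)$, so Vladoiu's theorem cited in the introduction from \cite{VL} makes $I$ Cohen-Macaulay and hence aCM; this leaves only the sub-case $\bight(I) = \hte(I)+1$ as genuinely new content, in which $\depth(R/I)$ must equal $\dim(R/I) - 1$ exactly, a rigidity that suggests the alternative route of constructing an explicit regular sequence of length $\dim(R/I) - 1$ on $R/I$ by generic linear forms, using that $\hte(P) \leq \bight(I) = \hte(I)+1$ for every $P \in \Ass(I)$.
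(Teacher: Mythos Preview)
Your reduction to the equality $\pd(R/I)=\bight(I)$ is the right target, and the inductive scheme via the exact sequence on $x_n$ would succeed once the bookkeeping is done; both $I:x_n$ and $I+(x_n)$ are again of Veronese type as you say, and a case check shows the depth lemma closes the induction. However, this is a much longer road than the paper takes. The paper simply invokes the closed formula $\depth(R/I)=\max\{0,\,d+n-1-\sum_i a_i\}$ from \cite[Corollary~5.7]{HRV}, which via Auslander--Buchsbaum gives $\pd(R/I)=\min\{n,\,\sum_i a_i-d+1\}$ at once; combined with the computation $\bight(I)=\min\{n,\,\sum_i a_i-d+1\}$ from Theorem~\ref{T7}, the equality $\pd=\bight$ drops out with no induction. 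So your strategy is essentially a re-derivation of a result already available by citation. Note also that your phrase ``one reads off'' for the $\bight$ formula hides a small argument: one must check that when $|B|<\min\{n,\sum a_i-d+1\}$ the set $B$ can always be enlarged inside $\Ass(I)$, which is exactly the contradiction step the paper carries out.

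Your fallback route at the end, however, has a genuine gap. The heuristic ``all associated primes have height at most $\hte(I)+1$, so generic linear forms give a regular sequence of length $\dim(R/I)-1$'' is false for monomial ideals in general: Example~\ref{T2}+3 in this very paper (the transversal ideal $I=(x_1,x_2,x_3)(x_4,x_5,x_6,x_7)$) satisfies $\bight(I)-\hte(I)\le 1$ yet is not aCM. The bound on heights of associated primes does not control embedded primes that appear \emph{after} cutting by the first linear form, so the regular-sequence construction cannot be made to work from height data alone. Whatever makes the Veronese case special has to enter explicitly, and the depth formula from \cite{HRV} is precisely that input.
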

\begin{proof}
$(i) \Rightarrow (ii):$ This is obvious by Corollary \ref{C1}.\\
$(ii) \Rightarrow (i):$ Since $I$ is a Veronese type ideal, by Theorem \ref{T7}, we have
\[ P_{A} \in \Ass(I) \Longleftrightarrow \sum_{i=1}^{n} a_{i}-d+1 \geq \vert A \vert , ~~~ \sum_{i \notin A} a_{i} \leq d-1. \]
If $ \bight(I)  =\sum_{i=1}^{n} a_{i}-d+1$, then $\hte(I) \geq \sum_{i=1}^{n} a_{i}-d$. By \cite[Corollary 5.7]{HRV}, we have $\depth (R/I)= \max \{0, d+n-1-\sum_{i=1}^{n}a_{i} \}$ and so $\depth (R/I) = 0$ or $\depth (R/I) \ne 0$.
If $\depth (R/I) \ne 0$, then $\dim (R/I) =n - \hte (I) \leq n- \sum_{i=1}^{n} a_{i}+d = \depth (R/I)+1$. Thus I is aCM in this case. If $\depth (R/I) =0$, then $\frk{m} \in \Ass(I)$ and so $\hte(I) \geq n -1$. Thus $\dim R/I \leq 1$. Hence $I$ is aCM. Now, suppose that $\bight(I)  \leq\sum_{i=1}^{n} a_{i}-d$. Assume $P_{B}=(x_{i_1},\ldots, x_{i_r}) \in \Ass(I)$ such that $\bight(I) = \hte(P_B)$. Then $P_{C}=(x_{i_1},\ldots, x_{i_r}, x_{i_{r+1}}) $ satisfies the conditions in Theorem \ref{T7}, since $\sum_{i=1}^{n} a_{i}-d+1 \geq \vert C \vert$ and $\sum_{i \notin C} a_{i} \leq \sum_{i \notin B} a_{i} \leq d-1$. Therefore $P_{C} \in \Ass(I)$ and this is contrary to $\bight(I) = \hte(P_B)$. This completes the proof.
\end{proof}

\begin{Corollary}(Compare with \cite[Theorem 3.4]{VL})
Let $I$ be an ideal of Veronese type. Then $I$ is Cohen-Macaulay if and only if  $\bight(I)=\hte(I)$.
\end{Corollary}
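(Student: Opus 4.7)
The plan is to adapt the argument in the proof of Theorem~\ref{T2}, sharpening the one-sided inequality $\bight(I)-\hte(I)\le 1$ into an equality, and then to combine it with the explicit depth formula of \cite[Corollary~5.7]{HRV} so that $\depth(R/I)$ coincides with $\dim(R/I)$ on the nose.

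For the forward direction, if $I$ is Cohen--Macaulay, then $\depth(R/I)=\dim(R/I)$, and by Auslander--Buchsbaum $\pd(R/I)=n-\depth(R/I)=n-\dim(R/I)=\hte(I)$. Combined with the standing bounds $\hte(I)\le\bight(I)\le\pd(R/I)$ (the latter already used in the proof of Corollary~\ref{C1}), this forces $\bight(I)=\hte(I)$.

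For the converse, assume $\bight(I)=\hte(I)$. If $\bight(I)=n$, then $\hte(I)=n$, $\dim(R/I)=0$, and $I$ is trivially Cohen--Macaulay. Otherwise $\bight(I)<n$, and the next step is to show $\bight(I)=\sum_{i=1}^{n}a_i-d+1$ by reusing Case~2 from the proof of Theorem~\ref{T2}. If instead $\bight(I)\le\sum a_i-d$, pick $P_B=(x_{i_1},\ldots,x_{i_r})\in\Ass(I)$ with $\hte(P_B)=\bight(I)=r$, then choose any $i_{r+1}\notin B$, which is possible because $|B|<n$. The prime $P_C=P_B+(x_{i_{r+1}})$ still satisfies $|C|\le\sum a_i-d+1$ and $\sum_{i\notin C}a_i\le\sum_{i\notin B}a_i\le d-1$, so Theorem~\ref{T7} places $P_C$ in $\Ass(I)$ with strictly greater height than $\bight(I)$, a contradiction.

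With $\hte(I)=\bight(I)=\sum a_i-d+1$ established, I would compute $\dim(R/I)=n-\hte(I)=n+d-1-\sum a_i$. Since $\bight(I)\le n$ gives $\sum a_i\le n+d-1$, \cite[Corollary~5.7]{HRV} yields $\depth(R/I)=\max\{0,d+n-1-\sum a_i\}=d+n-1-\sum a_i=\dim(R/I)$, so $I$ is Cohen--Macaulay. The main subtlety is precisely this matching step: the strict equality $\bight(I)=\hte(I)$ (rather than the looser aCM condition treated in Theorem~\ref{T2}) is what pins $\bight(I)$ to the extreme value $\sum a_i-d+1$ permitted by Theorem~\ref{T7}, and that in turn is exactly what makes the depth formula of \cite[Corollary~5.7]{HRV} land on $\dim(R/I)$ rather than one short.
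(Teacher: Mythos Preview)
Your proof is correct and follows essentially the same approach as the paper: both rely on the argument from the proof of Theorem~\ref{T2} to pin down $\bight(I)=\sum_i a_i-d+1$ and then invoke \cite[Corollary~5.7]{HRV} to match $\depth(R/I)$ with $\dim(R/I)$. The only cosmetic difference is that the paper splits cases on whether $\depth(R/I)=0$ (equivalently $\mathfrak m\in\Ass(I)$) rather than on $\bight(I)=n$, and in that case asserts $I=\mathfrak m^t$, whereas your observation that $\dim(R/I)=0$ already suffices is slightly cleaner.
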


\begin{proof}
If $I$ is CM, then it is clear that $\bight(I)=\hte(I)$.
Conversely, suppose that $\bight(I)=\hte(I)$. By \cite[Corollary 5.7]{HRV}, we have $\depth (R/I)= \max \{0, d+n-1-\sum_{i=1}^{n}a_{i} \}$. If $\depth (R/I)=0$, then $\frak{m}\in\Ass(I)$ and so $I=\frak{m}^t$ for some $t\in\mathbb{N}$.
Thus we have the result in this case. Let $\depth R/I\neq 0$ and so $\depth (R/I)= d+n-1-\sum_{i=1}^{n}a_{i}$. On the other hand,  by using the proof of Theorem \ref{T2} we conclude that $ \bight(I)=\sum_{i=1}^{n} a_{i}-d+1$ and so $\dim R/I=n-\hte(I)=n-\sum_{i=1}^{n} a_{i}+d-1=\depth (R/I)$. Therefore $I$ is CM. This completes the proof.
\end{proof}
The following example shows that the above theorem is not true for all square-free transversal polymatroidal ideals.

\begin{Example}
Let $I=(x_1x_4, x_1x_5, x_1x_6, x_1x_7, x_2x_4, x_2x_5, x_2x_6, x_2x_7, x_3x_4, x_3x_5, x_3x_6, x_3x_7)$ be a monomial ideal of $R=k[x_1,\ldots,x_7]$. Then $I$ is matroidal ideal and has the following primary decomposition,
\[ I=(x_1, x_2, x_3 )(x_4,x_5,x_6,x_7).\]
Therefore $\bight(I)-\hte(I) \leq 1$, but $I$ is not aCM.
\end{Example}
In the following result we use the ideal $P_{i}=(x_1, x_2,\ldots ,\widehat {{x_i}},\ldots,x_n)$, where $x_i$ in the generated of $P_{i}$ is omitted.

\begin{Proposition}
Let $I$ be an aCM polymatroidal ideal of degree $d$ such that $\frk{m} \in \Ass(I)$. Then $I$ is a Veronese type ideal.
\end{Proposition}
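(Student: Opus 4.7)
My plan is to combine the hypotheses in three stages and then do the combinatorial heart of the argument using the polymatroidal exchange property.

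\textbf{Stage 1 (dimension reduction).} Since $\mathfrak{m}\in\Ass(I)$ we have $\bight(I)=n$. Because $I$ is aCM, Corollary~\ref{C1} gives $\bight(I)-\hte(I)\le 1$, hence $\hte(I)\ge n-1$ and so $\dim(R/I)\le 1$. This puts us in one of only two cases to analyze ($\hte(I)=n$ or $\hte(I)=n-1$), and in particular gives us substantial structural information about the minimal primes of $I$.

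\textbf{Stage 2 (socle-type generators).} From $\mathfrak{m}\in\Ass(I)$ pick a monomial $w$ with $(I:w)=\mathfrak{m}$; since $I$ is generated in the single degree $d$ we may assume $\deg w=d-1$. Then $x_i w\in I$ for every $i\in[n]$, and because every generator of $I$ has degree exactly $d$, the element $x_i w$ must itself lie in $G(I)$. Writing $w=x_1^{c_1}\cdots x_n^{c_n}$ with $\sum c_i=d-1$, we therefore have the $n$ distinguished generators
\[
x_i\,w \in G(I)\qquad(i=1,\ldots,n),
\]
which share a common ``base'' $w$ and differ only by one unit of exponent in each coordinate direction.

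\textbf{Stage 3 (identifying the Veronese type).} Set $a_i=\max\{\deg_{x_i}(u):u\in G(I)\}$, so $I\subseteq I_{d;a_1,\ldots,a_n}$ automatically. The goal is the reverse inclusion: given any monomial $v=x_1^{b_1}\cdots x_n^{b_n}$ with $\sum b_i=d$ and $b_i\le a_i$, show $v\in G(I)$. For each $j$, fix a generator $u_j\in G(I)$ realizing $\deg_{x_j}(u_j)=a_j$. Starting from a suitable $x_i w$, I would induct on the $\ell^1$-distance $\sum_i |b_i-\deg_{x_i}(u)|$, applying the polymatroidal exchange property at each step: for a coordinate where the current monomial is deficient relative to $v$, exchange against $u_j$ to raise $\deg_{x_j}$; for a coordinate in excess, exchange against some $x_i w$ to lower it. The bounds $b_j\le a_j$ together with the symmetric availability of generators in every coordinate direction (from Stage~2) guarantee that at least one such exchange is legal and strictly decreases the distance.

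\textbf{Main obstacle.} The delicate point is Stage~3: showing that the strict exchange we need is \emph{always} available and that the intermediate monomials remain inside $G(I)$. The exchange axiom only provides \emph{some} index $j$ with $\deg_{x_j}(u)<\deg_{x_j}(v)$ making $x_j(u/x_i)\in G(I)$, not the index we want; one therefore has to argue that, relative to the distinguished family $\{x_i w\}_i$ and the ``maximizers'' $\{u_j\}_j$, the exchange can always be steered toward $v$. The dimension constraint $\dim R/I\le 1$ from Stage~1 should be what prevents pathological obstructions (e.g.\ forcing $a_k=d$ for all but at most one coordinate when $\hte(I)=n$), and reducing to those cases makes the orchestrated exchange tractable.
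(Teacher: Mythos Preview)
Your Stage~1 matches the paper exactly: from $\mathfrak{m}\in\Ass(I)$ one gets $\depth(R/I)=0$, and aCM then forces $\dim(R/I)\le 1$, i.e.\ $\hte(I)\ge n-1$. From that point on, however, the paper does \emph{not} attempt anything like your Stages~2--3. Instead it quotes two structural results: by Herzog--Vladoiu \cite[Theorems~2.4,~2.6]{HV}, a polymatroidal ideal whose minimal primes lie among the height-$(n-1)$ primes $P_i=(x_1,\ldots,\widehat{x_i},\ldots,x_n)$ has an irredundant primary decomposition
\[
I \;=\; P_1^{d_1}\cap\cdots\cap P_r^{d_r}\cap \mathfrak{m}^{\,d},\qquad d_i=\reg(I_{P_i});
\]
then a monomial-localization computation $I(P_{\{i\}})=I:x_i^{a_i}=P_i^{\,d-a_i}$ together with \cite[Proposition~1.10]{MN} identifies this intersection as a Veronese type ideal. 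So the paper's argument is essentially a two-line citation once $\hte(I)\ge n-1$ is known.

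Your route is genuinely different and, as you yourself flag, has a real gap at Stage~3. The obstacle you name is exactly the issue: the exchange axiom, applied to $u$ and a maximizer $u_j$, hands you \emph{some} coordinate $i$ to decrement, and nothing prevents $\deg_{x_i}(u)\le b_i$, so the $\ell^1$-distance to the target $v$ need not drop. You say the bound $\dim R/I\le 1$ ``should'' rule out such obstructions, but you never show how. This is not a cosmetic gap: Stages~2 and~3 as written use only $\mathfrak{m}\in\Ass(I)$ and the polymatroidal exchange property, not the aCM hypothesis, yet there exist polymatroidal ideals with $\mathfrak{m}\in\Ass(I)$ that are not of Veronese type (e.g.\ suitable transversal polymatroidal ideals with one factor equal to $\mathfrak{m}$). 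So the aCM input must do real work inside Stage~3, and your sketch does not supply it. A smaller point: in Stage~2 the claim that one may take $\deg w=d-1$ also needs an argument, since a priori a socle generator could have degree $>d-1$.
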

\begin{proof}
Since $\frk{m} \in \Ass(I)$, then $\depth(R/I)=0$. Since $I$ is aCM, it follows that $\dim (R/I)=1$. Therefore $\hte(I)=n-1$. Since $I$ is a polymatroidal ideal, $I$ has the following presentation, $I=P_{1}^{d_1} \cap P_{2}^{d_2} \cap\ldots \cap  P_{r}^{d_r} \cap \frak{m}^{d}$ where $P_{i}=(x_1, x_2,\ldots ,\widehat {{x_i}},\ldots,x_n)$ and $d_i=\reg(I_{P_{i}})$ for all $i=1,2,\ldots, r$ (see \cite[Theorems 2.4, 2.6]{HV}). Since $I(P_{\{i\}})=I : x_{i}^{a_i} =P_{i}^{d_i} \cap \frak{m}^{d-a_i} =P_{i}^{d-a_i}$ for large $a_i$, this implies that $I$ is a Veronese type ideal by using \cite[Proposition 1.10]{MN}.
\end{proof}

The following example says that the assumption of $\frk{m} \in \Ass(I)$ in the above proposition is essential.
\begin{Example}
Let $I=(x_1x_2,x_1x_3)$ be a polymatroidal ideal of $R=k[x_1, x_2, x_3]$. Then $\dim (R/I)=2$ and $\depth(R/I)=1$  and so $I$ is aCM but $I$ is not a Veronese type ideal.
\end{Example}

In the following example $I$ is aCM, but $\pd(R/I) \ne \bight(I)$.
\begin{Example}
Let $I=(x_1x_3, x_1x_4, x_2x_3, x_2x_4)$ be an ideal of $R=k[x_1, x_2, x_3, x_4]$. Then $I$ is matroidal ideal of $R$ with $\dim (R/I)=2$, $\depth(R/I)=1$ and so $I$ is aCM, but $3=\pd(R/I)\neq\bight(I)=2$.
\end{Example}

For a monomial ideal $I$ of $R$ and $G(I)=\{u_1,\ldots,u_t\}$, we set $\supp(I)=\cup_{i=1}^t\supp(u_i)$, where $\supp(u)=\{x_i\mid u=x_1^{a_1}\ldots x_n^{a_n}, a_i\neq 0\}$ and we say that the monomial ideal $I$ is full-supported if $\supp(I)=\{x_1,\ldots,x_n\}$.

\begin{Theorem}\label{T3}
Let $I$ be a full-supported transversal polymatroidal ideal of degree $d>1$ with $I=P_{F_1} P_{F_2}\ldots P_{F_d}$. Then $I$ is aCM if and only if
\begin{enumerate}
\item[(i)] $I$ is principal ideal.
\item[(ii)]
$I=(x_1\ldots\hat{x_i}\ldots x_n,x_1\ldots\hat{x_j}\ldots x_n)$, where $1\leq i<j\leq n$.
\item[(iii)]
$I=(x_1\ldots x_{i-1}x_i^2x_{i+1}\ldots x_{n-1},x_1x_2\ldots x_n)$, where $1\leq i\leq n-1$.
\item[(iv)]
$I=I_{(d;a_1,\ldots,a_n)}$ such that $a_1=\dots=a_{n-r}=d$ and $a_{n-r+1}=\ldots=a_n=d-1$, where $0\leq r\leq n$.
\item[(v)]
$I=P_{F_1}P_{F_2}$ such that $\mid F_1\mid=\mid F_2\mid=2$ and $F_1\cap F_2=\emptyset$.
\end{enumerate}
\end{Theorem}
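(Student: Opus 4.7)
The plan is a two-sided classification organized around the graph $G_I$. Write $I = P_{F_1} \cdots P_{F_d}$ and set $c := c(G_I)$. Since $I$ is full-supported, $|\cup_i F_i| = n$, so Theorem \ref{T6} collapses to $\depth(R/I) = c - 1$. Every single vertex of $G_I$ is a tree, and for any tree $T$ in $G_I$ the corresponding associated prime has height $|\cup_{i \in V(T)} F_i| \geq |F_i|$ for each $i \in V(T)$. Hence Theorem \ref{T5} gives $\hte(I) = \min_i |F_i|$, achieved by single-vertex trees. The aCM condition $\depth R/I \geq \dim R/I - 1$ then becomes the key inequality
\begin{equation*}
c + \min_{1 \leq i \leq d} |F_i| \;\geq\; n. \qquad (\star)
\end{equation*}

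For the ``only if'' direction I would case-split on $c$. If $c = 1$, then $(\star)$ forces $|F_i| \geq n-1$ for every $i$, so each $F_i$ equals $[n]$ or $[n]\setminus\{j_i\}$; collecting the exclusions exhibits $I$ as a Veronese type ideal $I_{(d;a_1,\ldots,a_n)}$ with $a_j \in \{d-1,d\}$, which up to relabeling of variables is case (iv), with the degenerate sub-family in which every $F_i$ is a singleton giving case (i). If $c \geq 2$, let $C_1,\ldots,C_c$ be the components, $V_j = \cup_{i \in C_j} F_i$, and $n_j = |V_j|$. Distinct components share no variable, so $\sum_j n_j = n$; combining $\min_i |F_i| \leq \min_j n_j$ with $(\star)$ yields the combinatorial constraint $\sum_j (n_j - 1) \leq \min_j n_j$, which sharply restricts the tuples $(n_1,\ldots,n_c)$. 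A finite case check on the surviving profiles, together with the requirement that each component realize a connected transversal product of the correct degree, matches every configuration to case (ii), (iii), or (v).

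For the ``if'' direction I would verify each family by exhibiting an explicit transversal presentation and applying Theorems \ref{T5} and \ref{T6} directly. Case (i) is Cohen-Macaulay. For (ii), $I = (\prod_{k \neq i,j} P_{\{k\}}) \cdot P_{\{i,j\}}$ has no edges in $G_I$, so $c = n-1$, giving $\depth = n-2$ and $\hte = 1$. For (iii), $I = (\prod_{k=1}^{n-1} P_{\{k\}}) \cdot P_{\{i,n\}}$ has the single edge joining $\{i\}$ and $\{i,n\}$, hence $c = n-1$ and again $\depth = n-2$, $\hte = 1$. For (iv), $I = P_{[n]}^{d-r} \prod_{k=1}^{r} P_{[n]\setminus\{n-r+k\}}$ gives a complete $G_I$, so $c = 1$, $\depth = 0$, and $\hte = n-1$ (or $n$ if $r = 0$). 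For (v), disjointness forces $c = 2$ and full support forces $n = 4$, so $\depth = 1$, $\hte = 2$, $\dim = 2$.

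I expect the main obstacle to be the finite case analysis for $c \geq 2$: after $(\star)$ narrows the profile $(n_j)$, one must use polymatroidality, the full-support hypothesis, and the assumption $d > 1$ to rule out configurations that satisfy $(\star)$ but do not arise from (ii), (iii), or (v); the degree hypothesis in particular eliminates degenerate principal examples that would otherwise muddy the classification.
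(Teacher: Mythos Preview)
Your plan is essentially the paper's own argument: both reduce aCM to the inequality $c(G_I)+\min_i|F_i|\geq n$ via Theorems~\ref{T5} and~\ref{T6}, then exploit the variable-disjoint partition $n=\sum_j n_j$ over components (the paper phrases this as $n\geq c|F_1|$, yielding $c\geq (c-1)|F_1|$, which is equivalent to your $\sum_j(n_j-1)\leq\min_j n_j$) and case-split on the few surviving profiles. One bookkeeping slip: the principal case~(i) does not sit inside $c=1$ as a ``degenerate sub-family'' --- a full-supported principal transversal ideal with $d>1$ has every $|F_i|=1$ and hence $c=n\geq 2$, so it must be recovered in your $c\geq 2$ branch (profile $n_j\equiv 1$), alongside (ii), (iii), (v).
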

\begin{proof}
First of all we relabel the non-empty subsets $F_{i}$ by $ \vert F_1 \vert \leq \vert F_2 \vert \leq\ldots \leq \vert F_d \vert $.\\
  $(\Longleftarrow)$ The case $(i)$ is obvious.  Let consider the cases $(ii)$ and $(v)$. Then by using the Alexander dual of $I$ and Theorem \ref{T0} we conclude that $I^{\vee}$ has almost linear resolution and so $I$ is aCM. Thus we have the result in these cases.
For case $(iii)$, we have $I=(x_1)\ldots(x_i)\ldots(x_{n-1})(x_i,x_n)$ where $1\leq i\leq n-1$. Hence $\dim R/I=n-1$. Since $G_I$ has $n-1$ connected components, by Theorem \ref{T6} we have $\depth(R/I)=n-2$. Thus $I$ aCM in this case.
Let consider the case $(iv)$. We assume that $I=I_{(d,a_1,\ldots,a_n)}$ such that $a_1=\dots=a_{n-r}=d$ and $a_{n-r+1}=\ldots=a_n=d-1$,  where $0\leq r\leq n$.
Therefore, by Theorem \ref{T7}, $\Ass(I)=\{P_{F_1},\ldots,P_{F_{d-1}},\frk{m}\}$ where $P_{F_i}=(x_1,\ldots,\hat{x}_{n-r+i},\ldots,x_n)$. Hence $\dim R/I=1$ and $\depth R/I=0$. Therefore $I$ is aCM.

$(\Longrightarrow)$ By Theorem \ref{T5}, $\dim(R/I)=n- \min \{ |F_i | \mid i=1, 2,\ldots , d \}$. Therefore we have $\dim(R/I)=n- |F_1|$. Also by Theorem \ref{T6}, we have $ \depth(R/I)= c(G_{I})-1+n -|\cup_{i=1}^{d} F_{i} |$. Since $I$ is  full-supported transversal polymatroidal ideal, $I$ is aCM if and only if $c(G_{I})\geq n- |F_{1}|$. Since $n \geq c(G_{I}) |F_1|$ it follows that $c(G_{I})=n- |F_{1}|\geq (c(G_{I})-1 )|F_1|$. Therefore this inequality is valid if and only if  $(1) c(G_{I})=1$, $(2) |F_1|=1$ or $(3) |F_{1}|=2$ and $n=4 $.\\
Assume $c(G_{I})=1$, then $|F_{1}|\geq n-c(G_{I})=n-1$. If $|F_{1}|=n$, then $I$ is a Veronese ideal. Now, we assume that $|F_{i}|=n-1$ for $1\leq i\leq r$ and $|F_{i}|=n$  for $r+1\leq i\leq d$. Since $I$ is a full-supported transversal polymatroidal ideal without loss of generality we may assume that $I=P_{F_1} P_{F_2}\ldots  P_{F_{r}} \frk{m}^{d-r}$ such that $P_{F_i}=(x_1,\ldots,{\hat{x}}_{n-r+i},\ldots,x_n)$. Therefore by Corollary \ref{C3}, $I=P_{F_1} \cap P_{F_2} \cap\ldots\cap P_{F_{r}} \cap \frk{m}^{d}$ and by \cite[Proposition 2.11]{BH}, $I$ is Veronese type ideal of the form $I=I_{(d; a_1,\ldots, a_n)}$ such that $a_1=\ldots=a_{n-r}=d$ and $a_{n-r+1}=\ldots=a_n=d-1$.
If $|F_1|=1$, then $c(G_{I})\geq n-1$. If $c(G_{I})=n$, then $I$ is a principal ideal. If $c(G_{I})=n-1$, then the number of connected components of the graph $G_I$ are $n-1$ and so we have two cases for considering.
{\bf Case1}: $P_{F_i}=(x_i)$ for $1\leq i\leq n-2$ and $P_{F_{n-1}}=(x_{n-1},x_n)$. Since $I$ is a transversal polymatroidal ideal, we have $I=(x_1)(x_2)\ldots(x_{n-2})(x_{n-1},x_n)$. Therefore $I=(x_1\ldots x_{n-2}x_{n-1},x_1\ldots x_{n-2}x_n)$.\\
{\bf Case 2}: $P_{F_i}=(x_i)$ for $1\leq i\leq n-1$ and $P_{F_{n}}=(x_{i},x_n)$. Thus
\[I=(x_1\ldots x_{i-1}x_i^2x_{i+1}\ldots x_{n-1},x_1\ldots x_n).\]
If $|F_1|=2$, then $c(G_{I}) \geq 2c(G_{I})-2$. Therefore $c(G_{I})\leq 2$ and $n\leq 4$. If $c(G_{I})=1$, then the result follows as above. If $c(G_{I})=2$, then $n=4$. Since $|F_1|=2$, we have $I=P_{F_1}P_{F_2}$ such that $|F_1|=|F_2|=2$ and $F_{1} \cap F_{2} =\emptyset$.\\
  If $|F_1|\geq 3$, then $c(G_{I}) =1$. Hence the result follows as above in this case.
\end{proof}

\section{The aCM simplicial complex}
We say that a simplicial complex $\Delta$ is almost pure if for every two facets $F_{i}$ and $F_j$ belonging to $\Delta$, one has $\vert \vert F_{i} \vert - \vert F_j \vert \vert \leq 1$.
\begin{Lemma}\label{L1}
Let $\Delta$ be an aCM simplicial complex. Then it is almost pure.
\end{Lemma}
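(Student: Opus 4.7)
The plan is to invoke Corollary \ref{C1} applied to the Stanley-Reisner ideal $I_\Delta$ of $\Delta$. The key observation is that the difference $\bight(I_\Delta) - \hte(I_\Delta)$ exactly measures how far $\Delta$ is from being pure, so an upper bound of $1$ on this difference translates directly into almost pureness.

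In detail, I would first recall the standard primary decomposition of a Stanley-Reisner ideal: since $I_\Delta$ is a square-free (hence radical) monomial ideal, its associated primes coincide with its minimal primes, and they are precisely the ideals $P_F = (x_i : i \notin F)$ where $F$ runs over the facets of $\Delta$. Consequently, for any facet $F$ one has $\height(P_F) = n - |F|$, which gives
\[
\hte(I_\Delta) \;=\; n - \max\{|F| : F \text{ facet of }\Delta\}, \qquad \bight(I_\Delta) \;=\; n - \min\{|F| : F \text{ facet of }\Delta\}.
\]

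Now assume $\Delta$ is aCM, that is, $\depth(R/I_\Delta) \geq \dim(R/I_\Delta) - 1$. By Corollary \ref{C1} we obtain $\bight(I_\Delta) - \hte(I_\Delta) \leq 1$. Substituting the formulas above yields
\[
\max\{|F| : F \text{ facet}\} - \min\{|F| : F \text{ facet}\} \;\leq\; 1,
\]
and so for any two facets $F_i, F_j$ of $\Delta$ we have $\bigl||F_i| - |F_j|\bigr| \leq 1$, which is exactly the definition of $\Delta$ being almost pure.

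There is no real obstacle here; the proof is essentially a one-line translation via Corollary \ref{C1}, relying only on the elementary identification of $\Ass(R/I_\Delta)$ with the facets of $\Delta$. The only thing worth double-checking is that Corollary \ref{C1} is genuinely applicable to arbitrary monomial ideals (which it is, as stated), so no additional hypothesis on $\Delta$ beyond aCM is needed.
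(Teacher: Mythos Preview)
Your proof is correct and follows essentially the same route as the paper: both arguments identify the associated (equivalently, minimal) primes of $I_\Delta$ with the complements of the facets, translate almost pureness into the inequality $\bight(I_\Delta)-\hte(I_\Delta)\leq 1$, and then invoke Corollary~\ref{C1}. Your version is simply more explicit in writing out the formulas for $\hte$ and $\bight$ in terms of facet sizes.
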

\begin{proof}
Set $\Delta =\langle G_1, G_2, \ldots , G_m \rangle$, where $G_1, \ldots , G_m$ are facets of $\Delta$. Recall that the Stanley-Reisner ideal of $\Delta$ has the presentation $I=I_{\Delta}=P_{F_1} \cap P_{F_2} \cap \ldots \cap P_{F_m}$ such that $F_i=\bar{G_i}$, by \cite[Lemma 1.5.4]{HH1}. The minimal prime ideals of $I_{\Delta}$ correspond to the facets
of $\Delta$. Hence $\Delta$ is almost pure if and only if $\bight(I)-\hte(I) \leq 1$. Since $\Delta$ is aCM, we have the result by Corollary \ref{C1}.
\end{proof}

Let $\Delta$ be a simplicial complex on $[n]$ of dimension $d -1$. Recall that, for each $0 \leq i \leq d-1$, the simplicial complex $\Delta^{(i)}:=\lbrace F \in \Delta \vert ~ \vert F \vert \leq i+1 \rbrace$ is called the $i$-th skeleton of $\Delta$.  In addition, for each $0 \leq i \leq d-1$, the pure $i$-th skeleton of $\Delta$ is defined to be the pure subcomplex $\Delta(i)$ of $\Delta$ whose facets are those faces $F$ of $\Delta$ with $\vert F \vert = i+1$ (see \cite[Section 8.2.6]{HH1}).

The simplicial complex $\Delta$ is said to be connected if there exists a sequence of facets $F = F_0, F_1,\ldots, F_{n-1}, F_n = E$ such that $F_i\cap F_{i+1 } \ne \emptyset$ (see \cite[Section 1.5.1]{HH1}).

In the following we use $\tilde{H}_{i}(\Delta; k)$ which is the ith reduced simplicial homology group of $\Delta$ with coefficients in $k$.

\begin{Lemma}\label{L2}
Let $\Delta$ be $(d-1)$-dimensional ($d \geq 3$) simplicial complex such that for all faces $F \in \Delta$ and $i < \dim lk_{\Delta}F-1$, one has $\tilde{H}_{i}(lk_{\Delta}F; k)=0$. Then $\Delta$ is connected.
\end{Lemma}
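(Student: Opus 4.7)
The plan is to apply the hypothesis to a single, cleverly chosen face, namely $F=\emptyset$. By the link definition recorded in the excerpt, $lk_{\Delta}(\emptyset)=\{G\in\Delta\mid G\cap\emptyset=\emptyset,\ G\cup\emptyset\in\Delta\}=\Delta$, so $\dim lk_{\Delta}(\emptyset)=d-1$. Because $d\geq 3$, the index $i=0$ satisfies $0<d-2=\dim lk_{\Delta}(\emptyset)-1$, and the hypothesis then forces $\tilde H_{0}(\Delta;k)=0$. This is the only place the assumption $d\geq 3$ is used, and it is precisely what makes the range of admissible $i$ in the hypothesis include $i=0$.

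Next I would translate this vanishing into topological connectedness. It is standard that for a simplicial complex $\Delta$ the condition $\tilde H_{0}(\Delta;k)=0$ is equivalent to connectedness of the geometric realization $|\Delta|$, which in turn is equivalent to connectedness of the $1$-skeleton $\Delta^{(1)}$ as a graph. Since $\dim\Delta=d-1\geq 2$, the complex $\Delta$ must contain $2$-faces and hence edges; a facet of dimension $0$, say $\{v\}$, would make $v$ an isolated vertex of $\Delta^{(1)}$, contradicting its connectedness. Therefore every facet of $\Delta$ contains at least one edge.

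Finally, I would recover the facet-chain formulation of connectedness used in the paper. Given two facets $F,E$, choose vertices $v\in F$ and $w\in E$, and use connectedness of $\Delta^{(1)}$ to fix a path $v=v_{0},v_{1},\ldots,v_{k}=w$ whose consecutive pairs are edges of $\Delta$. For each $j$, pick a facet $H_{j}$ containing the edge $\{v_{j},v_{j+1}\}$. Then $F\cap H_{0}\ni v$, $H_{j}\cap H_{j+1}\ni v_{j+1}$, and $H_{k-1}\cap E\ni w$, so $F,H_{0},H_{1},\ldots,H_{k-1},E$ is the required sequence of facets with consecutive nonempty intersections. I expect the only mildly subtle point to be this last bookkeeping translation between $\tilde H_{0}(\Delta;k)=0$ and the paper's specific facet-chain definition of connectedness; the conceptual core of the argument is simply the evaluation $lk_{\Delta}(\emptyset)=\Delta$ combined with the strict inequality $0<d-2$ coming from $d\geq 3$.
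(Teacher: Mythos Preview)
Your proof is correct and follows essentially the same approach as the paper: apply the hypothesis to $F=\emptyset$, use $d\geq 3$ to get $\tilde H_{0}(\Delta;k)=0$, and conclude connectedness. The only difference is that where the paper simply cites \cite[Proposition~6.2.3]{V} for the passage from $\tilde H_{0}(\Delta;k)=0$ to connectedness, you unpack this step explicitly via the $1$-skeleton and the facet-chain formulation.
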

\begin{proof}
It is clear that $\tilde{H}_{i}(lk_{\Delta}{\emptyset}; k)=0$ for $i < \dim lk_{\Delta}{\emptyset}-1$, as $\emptyset$ is a face of $\Delta$. Therefore $\tilde{H}_{i}({\Delta}; k)=0$ for  $i < \dim{\Delta}-1$.
Since $ \dim{\Delta} \geq 2$, one has $\tilde{H}_{0}({\Delta}; k)=0$. Now by applying \cite[Proposition 6.2.3]{V}, we have $\Delta$ is connected.
\end{proof}

\begin{Example}
Let $\Delta= \langle \lbrace 1,2 \rbrace , \lbrace 4,5 \rbrace , \lbrace 3 \rbrace \rangle$ be $1$-dimensional simplicial complex on $[5]$. Then $I_{\Delta}=(x_1,x_2,x_3)\cap (x_3,x_4,x_5) \cap(x_1,x_2,x_4,x_5)$.
Set $I=I_{\Delta}$. Then \[I=(x_{1}x_{3}, x_{1}x_{4}, x_{1}x_{5}, x_{2}x_{3}, x_{2}x_{4}, x_{2}x_{5}, x_{3}x_{4}, x_{3}x_{5} )\] is aCM but $\Delta$ is not connected.
\end{Example}

\begin{Theorem}\label{T4}
A simplicial complex $\Delta$ of dimension $d-1$ is aCM if and only if

\item[(i)] $\tilde{H}_{i}(lk_{\Delta}F; k)=0$, for all $F \in \Delta \setminus \Delta(d-1)$, $i < \dim lk_{\Delta}F$ and
\item[(ii)] $\tilde{H}_{i}(lk_{\Delta}F; k)=0$, for all $F\in \Delta(d-1)$, $i < \dim lk_{\Delta}F-1$.

\end{Theorem}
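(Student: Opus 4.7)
The plan is to re-express the aCM condition via Hochster's formula for the local cohomology of $k[\Delta]$: for each $F \in \Delta$,
\[
\dim_k H^i_\mm(k[\Delta])_{-F} = \dim_k \tilde{H}_{i-|F|-1}(lk_\Delta F; k).
\]
Since $\dim k[\Delta] = d$, aCM amounts to $\depth k[\Delta] \geq d-1$, which by this formula is equivalent to the vanishing
\[
\tilde{H}_j(lk_\Delta F; k) = 0 \quad \text{for every } F \in \Delta \text{ and every } j < d-2-|F|,
\]
call it property $(\ast)$. The proof then reduces to showing $(\ast) \iff$ (i) and (ii), which I would do by splitting along the partition $\Delta = \Delta(d-1) \sqcup (\Delta \setminus \Delta(d-1))$ and matching index bounds on each piece.

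For $F \in \Delta(d-1)$, $F$ lies in some $(d-1)$-face, so $\dim lk_\Delta F = d-1-|F|$ and the inequality $j < d-2-|F|$ becomes $j < \dim lk_\Delta F - 1$, which is exactly condition (ii). For $F \in \Delta \setminus \Delta(d-1)$ one has a priori $\dim lk_\Delta F \leq d-2-|F|$. In the forward direction Lemma \ref{L1} (aCM $\Rightarrow$ almost pure) forces equality, because every such $F$ then sits in a facet of size $d-1$; hence $(\ast)$ reads $j < \dim lk_\Delta F$, matching (i). In the converse direction (i) covers $j < \dim lk_\Delta F$ directly, while $\tilde{H}_j(lk_\Delta F; k) = 0$ for $j > \dim lk_\Delta F$ is automatic.

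The main obstacle is the residual case $F \in \Delta \setminus \Delta(d-1)$ with strict inequality $\dim lk_\Delta F < d-2-|F|$, where the single degree $j = \dim lk_\Delta F$ is not immediately handled by (i). I would dispose of it by fixing a $(d-1)$-face $H$ of $\Delta$ (which exists since $\dim \Delta = d-1$) and letting $F' := F \cap H$; then $F' \in \Delta(d-1)$ with $|F'| < |F| \leq d-2$, and $lk_\Delta F'$ contains the two vertex-disjoint faces $F \setminus F'$ and $H \setminus F'$. A Mayer--Vietoris/connectivity argument inside $lk_\Delta F'$ converts a nonzero top cycle of $lk_\Delta F$ into a nonvanishing low-degree cycle of $lk_\Delta F'$, contradicting (ii) at $F'$. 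I expect this transfer step---whereby a defect at a non-maximal face is pushed onto a face of $\Delta(d-1)$ via a chosen top face $H$---to be the main technical hurdle; everything else is a clean pairing of the Hochster-derived condition $(\ast)$ with (i) and (ii).
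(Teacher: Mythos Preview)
Your forward direction matches the paper's: both invoke Hochster's formula and Lemma~\ref{L1} to translate the bound $j<d-2-|F|$ into $j<\dim lk_{\Delta}F-1$ (for $F\in\Delta(d-1)$) or $j<\dim lk_{\Delta}F$ (for $F\notin\Delta(d-1)$).

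For the converse you correctly isolate the residual obstruction, but your proposed single-step transfer to $F':=F\cap H$ for an \emph{arbitrary} $(d-1)$-face $H$ does not work. Take $d=5$ and $\Delta=\langle\{1,2,3,4,5\},\,\{1,6,8,9,10\},\,\{6,7\}\rangle$; the small facet $F=\{6,7\}$ witnesses the residual case, yet with $H=\{1,2,3,4,5\}$ one gets $F'=\emptyset$ and $lk_{\Delta}\emptyset=\Delta$ is contractible, so (ii) is \emph{not} violated at this $F'$. Condition (ii) does fail at $\{6\}=F\cap\{1,6,8,9,10\}$, but nothing in your sketch selects that top face. Merely having $F\setminus F'$ and $H\setminus F'$ vertex-disjoint in $lk_{\Delta}F'$ does not produce a nonzero low-degree cycle (two disjoint faces can sit happily in a contractible complex), and your ``Mayer--Vietoris/connectivity'' step has no specified cover.

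The paper's route for the converse is genuinely different. It reduces, as you do, to proving that (i) and (ii) force $\Delta$ to be almost pure, but it does this by induction on $\dim\Delta$. For each vertex $x_l$ the link $lk_{\Delta}\{x_l\}$ inherits conditions (i)--(ii) when $\{x_l\}\in\Delta(d-1)$ and satisfies Reisner's CM criterion outright when $\{x_l\}\notin\Delta(d-1)$; either way the link is almost pure or pure. Condition (ii) at $F=\emptyset$ gives $\tilde H_0(\Delta;k)=0$, so $\Delta$ is connected (Lemma~\ref{L2}); chaining any two facets through shared vertices then forces every facet to have size $d$ or $d-1$. If you want to rescue your approach, you will likely need this same connectivity plus an iterated descent choosing $F'$ adaptively, not a one-shot intersection with a fixed $H$.
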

\begin{proof}
Let $\Delta$ be aCM, by Hochster's formula $\tilde{H}_{i-\vert F \vert -1}(lk_{\Delta}F; k)=0$ for all $F \in \Delta$ and $i < d-1$. If $F \in \Delta$, then there is a face $F_{1}$ of dimension $d-1$ or $d-2$ containing $F$ such that $\dim lk_{\Delta}F=\vert F_{1}\setminus F \vert -1$, since $ F_{1} \setminus F \in lk_{\Delta}F$ and  $\Delta$ is almost pure by Lemma \ref{L1}. Therefore $\dim lk_{\Delta}F=d- \vert F \vert -1$ for $F \in \Delta(d-1)$ and $\dim lk_{\Delta}F=d- \vert F \vert -2$ for $F \in \Delta \setminus \Delta(d-1)$. Hence $\tilde{H}_{i}(lk_{\Delta}F; k)=0$, for all $F \in \Delta \setminus \Delta(d-1)$, $i < \dim lk_{\Delta}F$ and for all $F\in \Delta(d-1)$, $i < \dim lk_{\Delta}F-1$.\\
Conversely, it is enough to show that $\Delta$ is almost pure simplicial complex. Indeed, if $\Delta$ be almost pure, then $\dim lk_{\Delta}F=d- \vert F \vert -1$ or $\dim lk_{\Delta}F=d- \vert F \vert -2$ for all $F \in \Delta$. Therefore $\tilde{H}_{i-\vert F \vert -1}(lk_{\Delta}F; k)=0$ for $F \in \Delta$ and $i < d-1$.
We proceed by induction on the dimension of $\Delta$. If $\dim (\Delta)=1$, then $\Delta$ is almost pure. Assume $d \geq 3$ and  $F= \{ x_l \}$ be a face of $\Delta$ such that $x_l$ is a vertex. If $F \in \Delta(d-1)$, then by induction hypothesis $lk_{\Delta}F$ is almost pure, i.e, if $G$ be a facet of $lk_{\Delta}F$, then $|G|=d-2$ or $|G|=d-3$, since $\dim lk_{\Delta}F=d-2$. Set $\Gamma:=lk_{\Delta}F$. If $F \in \Delta \setminus \Delta(d-1)$, then by hypothesis for all $G \in \Gamma$ and for all $i < \dim lk_{\Gamma}G$, $\tilde{H}_{i}(lk_{\Gamma}G; k)=0$, since $G \cup F \in \Delta \setminus \Delta(d-1)$ and $lk_{\Gamma}G=lk_{\Delta}F\cup G$. Hence by \cite[Theorem 6.3.12]{V}, $lk_{\Delta}F$ is CM. Thus if $F_i$ and $F_j$ be a facets of $\Delta$ such that have a vertex $F= \{ x_l\}$ in common , then cardinality of $F_i$ and $F_j$ is equal $d$ or $d-1$ for $F \in \Delta(d-1)$ and $|F_i|=|F_j|$ for $F \in \Delta \setminus \Delta(d-1)$. let $H$ and $E$ be two facets of $\Delta$. Since $\Delta$ is connected, there exist facets $F_1, . . . , F_r$ with $H = F_1$ and $E = F_r$ such that $F_i \cap F_{i+1} \ne \emptyset$ for $i = 1, . . . , r$. Since for each $i$, $F_i$ and $F_{i+1}$ have a vertex in common, it follows that the cardinality of $F_i$ and $F_j$ is equal to $d$ or $d-1$. In particular $|H|$ and $|K|$ is equal to $d$ or $d-1$. Hence $\Delta$ is almost pure.
\end{proof}

\begin{Corollary}
Let $\Delta$ be $2$-dimensional simplicial complex. Then $\Delta$ is connected if and only if $\Delta$ be aCM.
\end{Corollary}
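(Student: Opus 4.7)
My plan is to specialize Theorem \ref{T4} to the case $d=3$ and observe that, for a two-dimensional complex, all of its homology-vanishing demands are automatically satisfied except one, which coincides exactly with $\tilde{H}_0(\Delta;k)=0$, i.e.\ with connectedness of $\Delta$.

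For the forward implication, suppose $\Delta$ is aCM. Then in particular part (ii) of Theorem \ref{T4}, applied to $F=\emptyset$ (for which $lk_\Delta F=\Delta$ and $\dim lk_\Delta F=2$), yields $\tilde{H}_i(\Delta;k)=0$ for every $i<1$; taking $i=0$ gives $\tilde{H}_0(\Delta;k)=0$, so $\Delta$ is connected. Equivalently, the full conclusions of Theorem \ref{T4} imply the hypothesis of Lemma \ref{L2}, which, since $d=3\geq 3$, also delivers connectedness.

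For the converse, assume $\Delta$ is two-dimensional and connected, and I would verify both parts of Theorem \ref{T4} by a short case analysis on $|F|$. Connectedness combined with $\dim\Delta=2$ rules out vertex-facets, so condition (i) only needs to be checked for $F=\{v\}$ lying on an edge-facet (and on no triangle) or for $F$ itself an edge-facet. In the former case $lk_\Delta F$ is a non-empty $0$-dimensional complex, so $\tilde{H}_i(lk_\Delta F;k)=0$ for every $i\leq -1$, which covers the required range $i<\dim lk_\Delta F=0$; in the latter case $lk_\Delta F=\{\emptyset\}$, and the required range $i<-1$ is vacuous for reduced homology. In condition (ii), for $F\in\Delta(2)$ with $|F|\geq 1$ the link is non-empty, and the demanded range $i<\dim lk_\Delta F-1$ lies entirely in degrees $\leq -1$, where reduced homology is automatically zero. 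The one remaining case, $F=\emptyset$, reduces to $\tilde{H}_0(\Delta;k)=0$, which is exactly the connectedness hypothesis.

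The argument is essentially bookkeeping; the only mild obstacle is tracking, for each value of $|F|$, the exact dimension of $lk_\Delta F$ and the range of $i$ forced by Theorem \ref{T4}, and then confirming that in dimension two everything outside the case $F=\emptyset$ becomes vacuous.
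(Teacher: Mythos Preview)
Your proposal is correct and follows essentially the same approach as the paper: both directions rest on Theorem \ref{T4}, with the aCM $\Rightarrow$ connected implication read off from the vanishing of $\tilde{H}_0(\Delta;k)$ (the paper phrases this via Lemma \ref{L2}, which you also mention), and the converse established by a case analysis on $|F|$ showing that, in dimension two, every condition of Theorem \ref{T4} is vacuous except the one at $F=\emptyset$, which is precisely connectedness. Your write-up is in fact somewhat more careful than the paper's (e.g.\ you explicitly rule out vertex-facets and track the exact range of $i$ in each case).
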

\begin{proof}
$(\Leftarrow)$ This follows by Lemma \ref{L2} and Theorem \ref{T4}.\\
$(\Rightarrow)$ Let $\Delta$ be connected. If $\vert F \vert=2$, then $lk_{\Delta}F$ consists of a discrete set of vertices or $lk_{\Delta}F$ is empty set . Therefore $\dim lk_{\Delta}F\leq 0$. Thus $F$ satisfies the condition of Theorem \ref{T4}. If
 $\vert F \vert=1$, then $lk_{\Delta}F$ consists of faces of dimension one or discrete set of vertex or both of them. Therefore $\dim lk_{\Delta}F=0$ for $F\in \Delta\setminus \Delta(2)$ and  $\dim lk_{\Delta}F=1$ for $F\in\Delta(2)$. Hence in this case $F$ satisfies the condition  of Theorem \ref{T4}. Since $\Delta$ is connected, we have $\dim \tilde{H}_{0}({\Delta}; k)=0$. Since $\tilde{H}_{0}({\Delta}; k) $ is free $k$-module of rank $0$ (see \cite[Proposition 6.2.3]{V}), we have $\tilde{H}_{0}({\Delta}; k)=0$. Hence $\tilde{H}_{0}({lk_{\Delta}} \emptyset; k)=0$.
\end{proof}

\begin{Corollary}
Let $\Delta$ be an aCM simplicial complex and F be a face of $\Delta$. Then $lk_{\Delta}F$ is aCM. In particular, if $F \in \Delta \setminus \Delta(d-1)$, then $lk_{\Delta}F$ is CM.
\end{Corollary}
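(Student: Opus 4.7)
The plan is to apply Theorem \ref{T4} to the subcomplex $\Gamma := lk_{\Delta}F$, together with the standard identity $lk_{\Gamma}G = lk_{\Delta}(F\cup G)$ for every $G\in \Gamma$. By Lemma \ref{L1} the complex $\Delta$ is almost pure, so every facet of $\Delta$ has cardinality $d$ or $d-1$; consequently $\dim\Gamma = d - |F| - 1$ when $F\in \Delta(d-1)$, and $\dim\Gamma = d - |F| - 2$ otherwise. A short combinatorial check gives the crucial translation $G\in \Gamma(\dim\Gamma) \iff F\cup G\in \Delta(d-1)$; in particular, whenever $F\in \Delta\setminus \Delta(d-1)$, any facet of $\Delta$ containing $F\cup G$ already contains $F$, so $F\cup G\in \Delta\setminus \Delta(d-1)$ for every $G\in \Gamma$.

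The ``in particular'' clause follows quickly. If $F\in \Delta\setminus\Delta(d-1)$, then by the remark above, condition (i) of Theorem \ref{T4} applied to $\Delta$ yields
\[
\tilde{H}_i(lk_{\Gamma}G;k) = \tilde{H}_i(lk_{\Delta}(F\cup G);k) = 0
\]
for every $G\in\Gamma$ and every $i<\dim lk_{\Gamma}G$. This is exactly Reisner's criterion, so $\Gamma$ is Cohen--Macaulay and hence aCM.

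For the general aCM statement, which is the remaining case $F\in \Delta(d-1)$, I would verify the two conditions of Theorem \ref{T4} for $\Gamma$ by transferring them from $\Delta$: condition (ii) on faces $G\in\Gamma(\dim\Gamma)$ follows from condition (ii) applied to $F\cup G\in\Delta(d-1)$, and condition (i) on faces $G\in\Gamma\setminus\Gamma(\dim\Gamma)$ follows from condition (i) applied to $F\cup G\in\Delta\setminus\Delta(d-1)$. The main point to watch, and the place where a mechanical slip is easiest, is matching the two strictness levels ``$i<\dim lk_{\Gamma}G$'' versus ``$i<\dim lk_{\Gamma}G - 1$'' against the corresponding inequalities for $\Delta$; once the dimension formulas above are in hand, this reduces to a routine count rather than a genuine obstacle.
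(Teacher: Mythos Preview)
Your proposal is correct and follows essentially the same route as the paper: set $\Gamma=lk_{\Delta}F$, use the identity $lk_{\Gamma}G=lk_{\Delta}(F\cup G)$, translate membership in the top pure skeleton via $G\in\Gamma(\dim\Gamma)\iff F\cup G\in\Delta(d-1)$, then invoke Reisner's criterion for the CM case and Theorem~\ref{T4} for the aCM case. Your case split (by whether $F\in\Delta(d-1)$) and your explicit dimension bookkeeping are in fact tidier than the paper's version, which splits by the location of $F\cup G$ and leaves the skeleton correspondence implicit, but the underlying argument is the same.
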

\begin{proof}
Set $\Gamma:=lk_{\Delta}F$ and let $G$ be a face of $\Gamma$. Since $lk_{\Gamma}G= lk_{\Delta}F\cup G$ and $\Delta$ is aCM, we have $\tilde{H}_{i}(lk_{\Delta}F\cup G ;k)=0$ for $F\cup G \in \Delta \setminus \Delta(d-1)$, $i < \dim lk_{\Delta}F\cup G$ and for $F\cup G \in \Delta(d-1)$, $i < \dim lk_{\Delta}F\cup G -1$. If $F\cup G \in \Delta \setminus \Delta(d-1)$, then $ G \in \Gamma \setminus \Gamma(d- |F|-1)$. Thus by Reisner's criterion $\Gamma$ is CM in this case.
If $F\cup G \in \Delta(d-1)$, then $ G \in \Gamma(d- |F|-1)$ and so $\tilde{H}_{i}(lk_{\Gamma}G ;k)=0$ for all $ G \in \Gamma(d- |F|-1)$, $i < \dim lk_{\Gamma} G -1$. Hence by Theorem \ref{T4}, $lk_{\Delta}F$ is aCM .
\end{proof}


\begin{acknowledgement}
We would like to thank deeply grateful to the referee for the careful reading
of the manuscript and the helpful suggestions. The second author has been supported
financially by Vice-Chancellorship of Research and Technology, University of Kurdistan
under research Project No. 99/11/19299.
\end{acknowledgement}


\end{document}